\documentclass[11pt]{article}
\usepackage{amsmath,amssymb,amsthm}
\usepackage[all]{xy}


\newcommand{\pb}{{\boldsymbol{p}}}



\newcommand{\bone}{{1,\ldots,1}}
\newcommand{\cI}{{\cal I}}

\newcommand{\C}{{\mathbb C}}
\newcommand{\D}{{\mathbb D}}
\newcommand{\N}{{\mathbb N}}

\newcommand{\R}{{\mathbb R}}
\newcommand{\T}{{\mathbb T}}
\newcommand{\Z}{{\mathbb Z}}

\newcommand{\calS}{\mathcal{S}}
\newcommand{\sos}{\mathrm{sos}}

\theoremstyle{plain}
\newtheorem{Theorem}{Theorem}[section]

\newtheorem{Corollary}[Theorem]{Corollary}

\newtheorem{Lemma}[Theorem]{Lemma}
\newtheorem{Proposition}[Theorem]{Proposition}

\theoremstyle{definition}

\newtheorem{Remark}[Theorem]{Remark}
\newtheorem{Example}[Theorem]{Example}

\newcommand{\ol}{\overline}

\renewcommand{\subset}{\subseteq}


\newcommand{\footnoteremember}[2]{
  \footnote{#2}
  \newcounter{#1}
  \setcounter{#1}{\value{footnote}}
} \newcommand{\footnoterecall}[1]{
  \footnotemark[\value{#1}]
}


\begin{document}

\title{An Algebraic Perspective on Multivariate Tight Wavelet Frames. II}

\author{
Maria Charina\footnoteremember{myfootnote}{Fakult\"at f\"ur
Mathematik, TU Dortmund, D--44221 Dortmund, Germany}, \ Mihai
Putinar \footnote{Mathematics Department, University of California at Santa Barbara,
Santa Barbara CA 93106, $mputinar@math.ucsb.edu$, and
DMS-SPMS, Nanyang Technological University, 21
Nanyang Link, Singapore 637371, $mputinar@ntu.edu.sg$},\\
Claus Scheiderer \footnote{Fachbereich Mathematik und Statistik,
Universit\"at Konstanz, D--78457 Konstanz, Germany} \ and \
Joachim St\"ockler \footnoterecall{myfootnote} }

\maketitle

\begin{abstract}Continuing our recent work in \cite{CPSS2012}
we study polynomial masks of multivariate tight wavelet frames from two additional and complementary points of view: convexity and system theory. We consider such polynomial masks that
are derived by means of the unitary
extension principle from a single polynomial.
We show that the set of such polynomials is convex and reveal its
extremal points as polynomials that satisfy the quadrature mirror filter condition.
Multiplicative structure of such polynomial sets allows us to improve the known upper bounds on the number of
frame generators derived from box splines.
In the univariate and bivariate settings, the polynomial masks of a tight wavelet frame
can be interpreted as
the transfer function of a
conservative multivariate linear system. Recent advances in system theory
enable us to develop a more effective method for tight frame constructions.
Employing an example by S. W. Drury, we show that for dimension greater than $2$
such transfer function representations of the corresponding polynomial masks
do not always exist. However, for wavelet masks derived from multivariate polynomials with
 non-negative coefficients,
we determine explicit transfer function representations.
We illustrate our results with several examples.
\end{abstract}


\noindent {\bf Keywords:} multivariate wavelet frame, positive polynomial, sum of hermitian squares, transfer function.\\

\noindent{\bf Math. Sci. Classification 2000:} 65T60, 14P99 11E25,
90C26, 90C22.

\section{Introduction}

A tight wavelet frame of $L_2(\R^d)$ is determined, via Fourier transform, by a
finite set of trigonometric polynomials $p,a_1,\ldots,a_N$.
The trigonometric
polynomial $p$ enters as the unique ingredient into the multiplicative identity
\begin{equation}\label{eq:twoscale}
    \hat\phi(M^T\theta)=p(z)\hat \phi(\theta),\qquad \theta\in\R^d,~~z_j=e^{i\theta_j},
\end{equation}
where $M$ is a $d \times d$ matrix with integer entries whose eigenvalues
are greater than $1$ in absolute value.  The identity \eqref{eq:twoscale} is
called the two-scale relation, as it defines a representation of $\phi$
in terms of shifts of scaled versions of $\phi$, i.e.
\begin{equation}\label{eq:twoscaleb}
  \phi(x)=|\det M| \sum_{\alpha\in\Z^d} p(\alpha)\phi(Mx-\alpha),\qquad
        x\in\R^d.
\end{equation}
Here, $p(z)=\sum_{\alpha\in \Z^d} p(\alpha) z^\alpha$ has finitely many nonzero
 coefficients $p(\alpha)$ and $z^\alpha=z_1^{\alpha_1}\cdots z_d^{\alpha_d}$.

The translation group
$G=2\pi M^{-T}\Z^d / 2 \pi \Z^d$
plays a central role in the discussion of the two-scale relation.
Clearly, $G$ is a finite group of order
 $m = |\det M|$.
Throughout this article we maintain the notation and terminology introduced
in \cite{CPSS2012}.
Our main object of study, as in the previous article \cite{CPSS2012},
is the {\em mask} $p$,
regarded as a Laurent polynomial or, equivalently,
a trigonometric polynomial on the $d$-dimensional torus
$$
  \T^d=\{z=(z_1,\ldots,z_d)\in\C^d: |z_j|=1~\hbox{for}~j=1,\ldots,d\}.
$$
An element of the group  $\sigma=(\sigma_1,
\dots,\sigma_d)\in G$ acts on $p\in\C[\T^d]$ by
$$
 p^\sigma(z)\>:=\>p(e^{-i\sigma_1}z_1,\ldots,e^{-i\sigma_d}z_d),\qquad z\in\T^d.
$$
The conditions
\begin{equation}\label{eq:patone}
   p^\sigma(1,1,\ldots,1)=\delta_{0,\sigma},\qquad \sigma\in G,
    \end{equation}
 are called zero conditions or
sum rules of order $1$ in the literature, see
\cite{JiaJiang2002} and references therein, and are important for the
analysis of various properties of $\phi$.
Another important ingredient of the analysis is the fact that
the support of $\phi$ is contained
in the convex hull of $\{\alpha\in\Z^d: p(\alpha) \ne 0\}$.

We let $F_p=(p^\sigma)_{\sigma\in G}$,
$F_{a_j}=(a_j^\sigma)_{\sigma\in G}:\T^d\to\C^m$ be column vectors.
Then the identity
\begin{equation}\label{eq:UEP}
   I_m-F_p(z)F_p(z)^* = \sum_{j=1}^N F_{a_j}(z)F_{a_j}(z)^*
\end{equation}
is called the {\em Unitary Extension Principle} (UEP) in the seminal work on
frames and shift-invariant spaces by
Ron and Shen \cite{RS95}. Here, $F_p(z)^*=\overline{F_p(z)}^T$
denotes complex conjugation and transposition. If the identities  \eqref{eq:patone} and
\eqref{eq:UEP} are satisfied, then the functions
\begin{equation}\label{eq:TWF}
  \psi_j(x)=|\det M| \sum_{\alpha\in\Z^d} a_j(\alpha)\phi(Mx-\alpha),\qquad
        x\in\R^d,
\end{equation}
are the generators of a tight wavelet frame; i.e. the family
$$
   X(\Psi)=\{
    m^{j/2} \psi_l(M^j\cdot-k): 1\le l\le N,~j\in\Z,~k\in\Z^d\}
$$
defines a tight frame of $L_2(\R^d)$. Therefore, the UEP is the core
of many constructions of tight wavelet frames, see e.g.
\cite{CS, CH,CHS01,DHRS03,HanMo2005,LS06,RS95, Selesnick2001}.

The constraint
\begin{equation}\label{eq:subqmf}
 f =  1-\sum_{\sigma\in G} p^{\sigma *}p^\sigma\ge 0
\end{equation}
is known in the literature as the sub-QMF condition on the trigonometric
polynomial $p\in \C[\T^d]$. Due to $f=\det(I_m-F_pF_p^*)$, the condition in
\eqref{eq:subqmf} is necessary for the existence of $a_1,\ldots,a_N$ that
satisfy  the UEP identities in \eqref{eq:UEP}.

In the first part of this article we
investigate the convex structure of the set of trigonometric polynomials $p$ subject to the
 restrictions \eqref{eq:patone} and \eqref{eq:subqmf}.
The following certificate of the positivity condition turns out to be of great importance
\begin{equation}\label{eq:fpsos}
 f = 1-\sum_{\sigma\in G} p^{\sigma *}p^\sigma = \sum_{j=1}^L |h_j|^2
\end{equation}
where $h_j$ are $G$-invariant trigonometric polynomials. This certificate is called
{\em sum of hermitian squares} (sos) decomposition of the non-negative
trigonometric polynomial $f$.
It was shown in \cite{CPSS2012} and \cite{LS06}
that the condition \eqref{eq:fpsos} is necessary and sufficient for
the existence of trigonometric polynomials $a_j$ in \eqref{eq:UEP}.
The convex structure of the trigonometric polynomials $p$ subject to the
 restrictions \eqref{eq:patone} and \eqref{eq:fpsos}
is more complicated, as this set is not closed.
However, we prove that the extremal points of the
underlying convex sets coincide.
We also show that the sets of those $p$ satisfying either
\eqref{eq:subqmf} or \eqref{eq:fpsos}  are closed under multiplication.
Consequently, we investigate the number of squares $L$
in \eqref{eq:fpsos} of a product $p = p_1 p_2$ in terms of those of the factors
$p_1$ and $p_2$.
Combined with the construction of tight wavelet frames in \cite{LS06},
we obtain better bounds for the number of tight frame generators for
a large class of trigonometric polynomials $p$, including the masks of
multivariate box splines.

So far, we treated the mask $p$ and  $f = 1-\sum_{\sigma\in G} p^{\sigma *}p^\sigma$
as trigonometric polynomials. Since our analysis is not affected
by multiplication of $p$ by a fixed monomial $e^{i\beta\theta}$,
we can assume that $p(z) = \sum_{\alpha\in\N_0^d} p(\alpha) z^ \alpha$ is a polynomial
in $z_j=e^{i\theta_j}$. Therefore, in Section \ref{sec:system_theory_twf}, we
consider $p$ as a complex analytic polynomial $p\in\C[z]$ and
rephrase the decomposition \eqref{eq:fpsos} as
$$ f(z,\overline{z}) = 1-\sum_{\sigma\in G} p^{\sigma}(z)^*p^\sigma(z) =
\sum_{j=1}^L h_j(z)^*h_j(z) + R(z,\overline{z})$$
where $p, h_j$ are complex analytic polynomials, $z$  belongs to the polydisk
$$\D^d=\{z=(z_1,\ldots,z_d): |z_j|<1,~j=1,\ldots,d\},$$
and the residual part
$R(z,\overline{z})$ vanishes on the torus.

This slight change of perspective brings into focus the complex analytic, vector valued polynomial
$$ F_p(z) = (p^\sigma(z))_{\sigma \in G} \in \C^d, \ \ z \in \D^d,$$
subject, by the maximum principle, to the contractivity condition
$$ F_p(z)^* F_p(z) \leq 1, \ \  z \in \D^d.$$
Analytic functions as above, from the polydisk to the unit ball $F_p: \D^d \longrightarrow B(0,1) \subset \C^{m}$,
were intensively studied for more than a century.
The classical works of Schur, Carath\'eodory, Fej\'er and Nevanlinna
have completely settled the intricate structure of analytic functions
from the disk to the disk or the half-plane.
The rather independent and self-sustaining field of bounded analytic
interpolation in one or several complex variables
deals exclusively with such functions.

About half a century ago, electrical engineers, and then many more applied mathematicians, have discovered that
some bounded analytic functions as our $F_p$, or its modification in \eqref{eq:def_fp}, can be interpreted as transfer functions of multivariate, linear
systems appearing in control theory. The second part of our article contains an introduction aimed at the non-expert
to the realization theory of bounded analytic functions in the polydisk. We give precise references to recent and classical
works and we illustrate the benefits of this new dictionary with examples arising in the construction of tight wavelet frames.

In particular we show in Theorem \ref{th:nonnegative_coeff}
  that polynomials $p \in \C[z]$ with non-negative coefficients and satisfying the  conditions \eqref{eq:patone} and \eqref{eq:subqmf}
are transfer functions of finite dimensional linear systems,
complementing and improving typical wavelet theory
results \cite{LS06}. Moreover, we use the adjunction formula for transfer functions,
Proposition \ref{prop:flip_flop}, in order to devise a new technique for
passing from the sos-decomposition in \eqref{eq:fpsos} to the construction of
$a_1,\ldots,a_N$ in the UEP \eqref{eq:UEP}. In Example \ref{ex:B111a} we show that,
even for the simplest nonseparable mask of the
piecewise linear three-directional box-spline $B_{111}$, the techniques from system theory
improve all known frame constructions. Indeed, we obtain
$5$ trigonometric polynomials $a_1,\ldots,a_5$ of coordinate degree $2$,
which complement
the mask of $B_{111}$.

{\bf Acknowledgement.} The second author is indebted to the Gambrinus Fellowship of
Technische Universit\"at Dortmund for support and hospitality in June 2013. The present work
could not be finished without the generous support of the Institute of Mathematics and
Applications in Singapore, where all authors met in December 2013.

\section{Convexity properties of tight wavelet frames} \label{sec:convexity}

In this section we study the properties of the sets of
trigonometric polynomials satisfying the sub-QMF condition and its
subset of trigonometric polynomials which yield tight wavelet
frames.

Denote by $\T^d$ the $d$-dimensional torus
$$
 \T^d\>=\>\{z=(z_1,\ldots,z_d)\in\C^d\colon\>|z_j|=1~\text{for}~j=1,\ldots,d\}.
$$
The vector space $\C[\T^d]$ of
trigonometric polynomials on $\T^d$ is equipped with the finest
locally convex topology under which all linear functionals are
continuous. A basis of neighborhoods of the origin is defined by
the semi-norms
$$
   |p|_\lambda = |\lambda(p)|,\qquad \lambda\in (\C[\T^d])^*,
   \quad p\in\C[\T^d].
$$

For the dilation matrix $M \in \Z^{d \times d}$ define
 $m:=|\hbox{det}(M)|\not=0$. The translation group
$G=2\pi M^{-T}\Z^d / 2 \pi \Z^d$ acts on $p\in\C[\T^d]$ by
$$
 p^\sigma(z)\>:=\>p(e^{-i\sigma_1}z_1,\dots,e^{-i\sigma_d}z_d),
\qquad z\in\T^d,\quad \sigma\in G.
$$
Let $G':=G^*$ be the character group of $G$ and
$p=\sum_{\chi\in G'}p_\chi$ be the isotypical decomposition of
$p$. For each $\chi\in G'$, we choose $\alpha_\chi\in\Z^d$
such that $\tilde{p}_\chi=z^{\alpha_\chi} p_\chi$  is
$G-$invariant. In signal analysis, $\tilde p_\chi$ is called
polyphase component of $p$. Then we have
\begin{equation} \label{def:p_chi,tilde_p_chi}
 \sum_{\sigma\in G} p^{\sigma*} p^\sigma\>=\>
 m \sum_{\chi\in G'} p_{\chi}^* p_\chi=m \sum_{\chi\in G'} \tilde{p}_{\chi}^*
 \tilde{p}_\chi\qquad  \text{on}~\T^d.
\end{equation}

It is well-known that the set of all real-valued, non-negative trigonometric
polynomials is a closed convex cone which is also
closed under multiplication.
We show that the set
$$
   \mathcal{S} =\mathcal{S}(M,d)=
     \{p\in \C[\T^d] : ~p(\bone)=1,~
   f=1-\sum_{\sigma\in G} p^{\sigma *}p^\sigma\ge 0 \ \hbox{on} \ \T^d\}
$$
of trigonometric polynomials that satisfy the sub-QMF condition has the same properties.
Its extremal points are those $p \in \C[\T^d]$ with
$f=0$ on $\T^d$.
Clearly, the set $\mathcal{S}$ is not compact.

Our main interest lies in the set
$$
   \mathcal{S}_{\rm sos} = \{p\in \mathcal{S} : ~
   f=1-\sum_{\sigma\in G} p^{\sigma*}p^\sigma~\mbox{is hermitean sum of squares on} \
   \T^d\},
$$
which is the subset of $\mathcal{S}$ of those $p$ that allow the
sos-representation \eqref{eq:fpsos} of $f$.
By \cite[Theorem 2.2]{CPSS2012} or \cite[Theorem 3.4]{LS06}, the trigonometric polynomials $p\in S_\sos$
yield tight wavelet frames in \eqref{eq:TWF}.
  We show that $\mathcal{S}_{\rm sos}$ is convex, is
not closed for $d\ge3$, and has the same extremal points as
$\mathcal{S}$. Moreover, the set $\mathcal{S}_{\rm sos}$ is closed under
multiplication. This latter property allows us to provide upper
 bounds on the number of the frame generators for box-splines of any dimension,
and to improve known upper bounds for special types of bivariate and
trivariate box-splines in \cite{LaiNam,LS06}.

By the Riesz-Fejer lemma ($d=1$) and Scheiderer's result
\cite{Scheiderer2006} ($d=2$), we have $\mathcal{S}_{\rm sos}=
\mathcal{S}$ for $d=1,2$, while the example in \cite{CPSS2012}
shows $\mathcal{S}_{\rm sos}\subsetneq \mathcal{S}$ for $d \ge 3$.
The following result describes the properties of the set
$\mathcal{S}_{\rm sos}$ also for dimensions $d\ge 3$.

\begin{Theorem}\label{prop:convexity} Let
$d\in\N$.
\begin{description} \item[$(i)$] The set $\mathcal{S}$ is closed and convex. Moreover, $\mathcal{S}$ is closed under
multiplication.

\item[$(ii)$] The set $\mathcal{S}_{sos}$ is convex and is closed under
multiplication.
\end{description}
\end{Theorem}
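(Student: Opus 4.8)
The plan is to reduce both assertions to two elementary identities for the defect polynomial $f_p:=1-\sum_{\sigma\in G}p^{\sigma*}p^\sigma$, and then to let the two defining conditions---pointwise nonnegativity of $f$ (for $\mathcal S$) and the $G$-invariant hermitian sum-of-squares property of $f$ (for $\mathcal S_{\sos}$)---be transported by the \emph{same} computation. Write $P_p:=\sum_{\sigma\in G}p^{\sigma*}p^\sigma=1-f_p$. Applying the scalar identity $|ta+(1-t)b|^2=t|a|^2+(1-t)|b|^2-t(1-t)|a-b|^2$ coordinatewise to $r^\sigma=tp^\sigma+(1-t)q^\sigma$, and expanding a product via $(pq)^\sigma=p^\sigma q^\sigma$ together with $|q^\sigma|^2=P_q-\sum_{\sigma'\neq\sigma}|q^{\sigma'}|^2$, I obtain, for $r:=tp+(1-t)q$ with $t\in[0,1]$,
\begin{equation}\label{eq:conv-identity}
 f_{r}=t\,f_p+(1-t)\,f_q+t(1-t)\sum_{\sigma\in G}(p-q)^{\sigma*}(p-q)^\sigma ,
\end{equation}
\begin{equation}\label{eq:prod-identity}
 f_{pq}=f_p+P_p\,f_q+\sum_{\sigma\neq\sigma'}(p^\sigma q^{\sigma'})^*(p^\sigma q^{\sigma'}) .
\end{equation}
The first step is the routine verification of \eqref{eq:conv-identity} and \eqref{eq:prod-identity}.

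For part $(i)$, closedness is a separate, purely topological point: for fixed $z\in\T^d$ and $\sigma\in G$ the map $p\mapsto p^\sigma(z)$ is a point evaluation, hence a linear functional, hence continuous in the finest locally convex topology, so $p\mapsto f_p(z)$ is continuous (a continuous function of the finitely many linear functionals $p\mapsto p^\sigma(z)$) and $\{p:f_p(z)\ge0\}$ is closed; thus $\mathcal S$ is the intersection of these sets over $z\in\T^d$ with the closed affine hyperplane $\{p:p(\bone)=1\}$. Convexity and closure under multiplication then follow at once from \eqref{eq:conv-identity} and \eqref{eq:prod-identity}: since $t(1-t)\ge0$, $P_p\ge0$ and each remaining summand is a modulus square, every term on the right-hand side is nonnegative on $\T^d$, so $f_r\ge0$ and $f_{pq}\ge0$ whenever $f_p,f_q\ge0$; the normalization survives because $p(\bone)=q(\bone)=1$ forces $r(\bone)=1$ and $(pq)(\bone)=1$.

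For part $(ii)$ I would feed the same two identities into the convex cone $\Sigma$ of $G$-invariant hermitian sums of squares. This cone is closed under addition and, because a product of $G$-invariant polynomials is again $G$-invariant, under multiplication; moreover the polyphase identity \eqref{def:p_chi,tilde_p_chi} shows that every expression $\sum_{\sigma\in G}w^{\sigma*}w^\sigma=m\sum_{\chi\in G'}\tilde w_\chi^*\tilde w_\chi$ belongs to $\Sigma$. Hence in \eqref{eq:conv-identity} the terms $t\,f_p$ and $(1-t)\,f_q$ lie in $\Sigma$ by hypothesis, and the last term is a nonnegative multiple of a polyphase sum, so $f_r\in\Sigma$; this gives convexity. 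In \eqref{eq:prod-identity} we have $f_p\in\Sigma$, while $P_p=\sum_\sigma p^{\sigma*}p^\sigma\in\Sigma$ and $f_q\in\Sigma$ give $P_p\,f_q\in\Sigma$.

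The step I expect to be the main obstacle is the off-diagonal remainder $D:=\sum_{\sigma\neq\sigma'}(p^\sigma q^{\sigma'})^*(p^\sigma q^{\sigma'})$ in \eqref{eq:prod-identity}. Although $D$ is $G$-invariant and nonnegative, the individual factors $p^\sigma q^{\sigma'}$ are \emph{not} $G$-invariant, so $D$ is not visibly a member of $\Sigma$---and here one genuinely needs the sum-of-squares structure rather than mere nonnegativity, since $\mathcal S_{\sos}\subsetneq\mathcal S$ for $d\ge3$. I would remove this obstacle by decomposing the index set $\{(\sigma,\sigma'):\sigma\neq\sigma'\}$ into orbits of the free diagonal action $\tau\cdot(\sigma,\sigma')=(\sigma+\tau,\sigma'+\tau)$; using $(p^\sigma q^{\sigma'})^\tau=p^{\sigma+\tau}q^{\sigma'+\tau}$, the contribution of each orbit is $\sum_{\tau\in G}h^{\tau*}h^\tau$ with $h=p^\sigma q^{\sigma'}$, which lies in $\Sigma$ by \eqref{def:p_chi,tilde_p_chi}. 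Summing over the finitely many orbits gives $D\in\Sigma$, whence $f_{pq}\in\Sigma$ and $(ii)$ follows. A matrix reformulation---$I_m-F_rF_r^*=t(I_m-F_pF_p^*)+(1-t)(I_m-F_qF_q^*)+t(1-t)(F_p-F_q)(F_p-F_q)^*$, combined with the UEP characterization \eqref{eq:UEP}---would prove convexity just as quickly, but the Schur-product analogue for $f_{pq}$ produces a diagonal matrix that need not be of the form $\sum_j F_{a_j}F_{a_j}^*$; this is precisely why I would carry out both parts of $(ii)$ through the scalar identities \eqref{eq:conv-identity}--\eqref{eq:prod-identity}.
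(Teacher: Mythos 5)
Your argument is correct, and in outline it is the paper's own: your convexity identity is precisely the paper's \eqref{eq:SSsos}, and your product identity is an algebraic expansion of the paper's one-line decomposition \eqref{eq:closedmult}, since in your notation $\sum_{\sigma}p^{\sigma*}p^{\sigma}\bigl(1-q^{\sigma*}q^{\sigma}\bigr)=P_p\,f_q+\sum_{\sigma\neq\sigma'}(p^{\sigma}q^{\sigma'})^*(p^{\sigma}q^{\sigma'})$. The one place you genuinely diverge---and add value---is the step you yourself flagged as the main obstacle: certifying that the remainder in the product identity is a sum of hermitian squares of \emph{$G$-invariant} polynomials. The paper's proof of Theorem \ref{prop:convexity} simply asserts that \eqref{eq:closedmult} implies closure of $\mathcal{S}_{\sos}$ under multiplication; the justification surfaces only later, in the proof of Lemma \ref{lem:L(pq)}, where the term $\sum_\sigma p^{\sigma*}p^\sigma(1-q^{\sigma*}q^\sigma)$ is handled by writing $1-q^*q=\sum_j\tau_j^*\tau_j$ (possible because $L(q)<\infty$ forces $\ell(q)\le L(q)+m-1<\infty$) and then applying the polyphase identity \eqref{def:p_chi,tilde_p_chi} to the products $p\tau_j$. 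Your alternative is to split the off-diagonal sum into orbits of the diagonal $G$-action and apply the same polyphase identity to $pq^{\tau}$ for each $\tau\neq0$; this rests on the same mechanism and even yields the same sos-length bound (your count gives $L(p)+mL(q)+m(m-1)$, which coincides with the paper's $L(p)+m\ell(q)$ after substituting $\ell(q)\le L(q)+m-1$), but it has the merit of being self-contained within the theorem's proof and of making explicit that $G$-invariance, not mere nonnegativity, is what survives multiplication. Likewise, your topological argument for closedness of $\mathcal{S}$ (point evaluations are continuous linear functionals in the finest locally convex topology, so $\mathcal{S}$ is an intersection of closed sets) is a correct expansion of what the paper dismisses as obvious.
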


\begin{proof}
Closedness of the set $\mathcal{S}$  is obvious. To show the
convexity of the sets $\mathcal{S}$ and $\mathcal{S}_{sos}$, let
first $p_1,p_2\in\mathcal{S}$, $t\in (0,1)$  and set $q:=(1-t)p_1
+tp_2$, $u:=p_1-p_2$. The identity
\begin{align*}
q^*q+t(1-t)u^*u\ & =\ (1-t)^2p_1^*p_1+t^2p_2^*p_2+t(1-t)(p_1^*p_2+
  p_2^*p_1) \\
& \quad\ +t(1-t)(p_1^*p_1+p_2^*p_2-p_1^*p_2-p_2^*p_1) \\
& =\ (1-t)p_1^*p_1+tp_2^*p_2
\end{align*}
implies
$$
\sum_{\sigma \in G} q^{*\sigma}q^\sigma+t(1-t)\sum_{\sigma\in G}
u^{*\sigma} u^\sigma\ =\ (1-t)\sum_{\sigma \in G}
p_1^{*\sigma}p_1^\sigma+ t\sum_{\sigma \in G}
p_2^{*\sigma}p_2^\sigma,$$ and, thus, we have
\begin{equation} \label{eq:SSsos}
1-\sum_{\sigma \in G}
q^{*\sigma}q^\sigma\>=\>(1-t)\Bigl(1-\sum_{\sigma \in G}
p_1^{*\sigma}p_1^\sigma\Bigr)+t\Bigl(1-\sum_{\sigma \in G}
p_2^{*\sigma} p_2^\sigma\Bigr)+t(1-t)\sum_{\sigma \in G}
u^{*\sigma}u^\sigma.
\end{equation}
Therefore, $q \in \mathcal{S}$. Furthermore, if
$p_1,p_2\in\mathcal{S}_\sos$, then $q \in \mathcal{S}_{sos}$.

Furthermore, we have
\begin{equation}\label{eq:closedmult}
   1-\sum_{\sigma\in G}(p_1p_2)^{\sigma *} (p_1p_2)^{\sigma} =
   1-\sum_{\sigma\in G}p_1^{\sigma *} p_1^{\sigma}
   +\sum_{\sigma\in G}p_1^{\sigma *} p_1^{\sigma}(1-p_2^{\sigma *} p_2^{\sigma})\ge 0.
\end{equation}
This implies that both $\mathcal{S}$ and $\mathcal{S}_{sos}$ are
closed under multiplication.
\end{proof}

We next characterize the extremal points of $\mathcal{S}$ and
$\mathcal{S}_{sos}$.

\begin{Theorem} \label{th:extremal_points_S} Let $d\in \N$.

\begin{description} \item[$(i)$] The trigonometric
polynomial $p\in \mathcal{S}_\sos$ is an extremal point of $\mathcal{S}_\sos$ if and
only if $p$ satisfies the QMF-condition
\begin{equation} \label{eq:QMF}
   \sum_{\sigma\in G} p^{\sigma *}p^\sigma\equiv 1\qquad\mbox{on}\quad \T^d.
\end{equation}
\item[$(ii)$] The extremal points of $\mathcal{S}$ and
$\mathcal{S}_{sos}$ coincide.
\end{description}
\end{Theorem}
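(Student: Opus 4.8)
The plan is to establish part $(i)$ in full and then read off $(ii)$. For the ``if'' direction of $(i)$ I would argue directly from the convexity identity \eqref{eq:SSsos}: assuming $f\equiv0$ and writing $p=(1-t)p_1+tp_2$ with $p_1,p_2\in\mathcal{S}_\sos$, $t\in(0,1)$ and $u:=p_1-p_2$, identity \eqref{eq:SSsos} (taken with $q=p$) exhibits $0=f$ as a sum of three nonnegative terms, the last being $t(1-t)\sum_{\sigma\in G}u^{\sigma*}u^\sigma$. Hence that term vanishes, which forces $u\equiv0$ and $p_1=p_2$; so $p$ is extremal. This uses only $f_{p_i}\ge0$, so the same computation works in $\mathcal{S}$.

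The real content is the converse, that $f\not\equiv0$ implies $p$ is not extremal, and \textbf{the main obstacle is to perturb $p$ within $\mathcal{S}_\sos$, i.e. keeping the sum-of-hermitian-squares property and not merely nonnegativity} --- a distinction that is vacuous for $d\le2$ but essential for $d\ge3$. My proposed device is the multiplicative perturbation $g:=i\,f\,p$, with $p_\pm:=p\pm\epsilon g=p(1\pm i\epsilon f)$ for small $\epsilon>0$. Two facts make it succeed. First, $f$ is real-valued and $G$-invariant, and since $p(\bone)=1$ forces $\sum_{\sigma}|p^\sigma(\bone)|^2\ge1$ and hence $f(\bone)\le0$, nonnegativity of $f$ gives $f(\bone)=0$; therefore $g(\bone)=0$ and $p_\pm(\bone)=1$. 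Second, using $f^\sigma=f$ a short computation yields the sign-independent factorization $1-\sum_\sigma p_\pm^{\sigma*}p_\pm^\sigma=f\bigl(1-\epsilon^2 f(1-f)\bigr)$. I would then rewrite the scalar factor as $1-\epsilon^2 f(1-f)=\epsilon^2\bigl(f-\tfrac12\bigr)^2+\bigl(1-\tfrac{\epsilon^2}{4}\bigr)$, which is a sum of hermitian squares for $\epsilon<2$; since $f$ is itself sos and the sos cone is closed under products, the whole right-hand side is sos. Thus $p_\pm\in\mathcal{S}_\sos$, they are distinct because $g=ifp\ne0$, and $p=\tfrac12(p_++p_-)$, so $p$ is not extremal.

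For $(ii)$ I would simply note that the two arguments above, applied to $\mathcal{S}$ in place of $\mathcal{S}_\sos$, show that the extremal points of $\mathcal{S}$ are exactly the QMF polynomials $\{p\in\mathcal{S}:f\equiv0\}$: the ``if'' part is \eqref{eq:SSsos} again, and the ``only if'' part is the same perturbation $g=ifp$, for which one now only needs $f\bigl(1-\epsilon^2 f(1-f)\bigr)\ge0$, which is immediate. Since every QMF polynomial lies in $\mathcal{S}_\sos\subseteq\mathcal{S}$ (the zero function is trivially sos), part $(i)$ identifies the extremal points of $\mathcal{S}_\sos$ with this same set, and the two sets of extremal points coincide. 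I expect the only genuinely delicate step of the entire proof to be the factorization and its sos-refactoring in the second paragraph; everything else is bookkeeping with \eqref{eq:SSsos} and the multiplicativity of the sos cone.
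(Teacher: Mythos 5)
Your proof is correct, and in the hard (``only if'') direction of part (i) it takes a genuinely different route from the paper's. The paper perturbs $p$ \emph{additively} in its trivial isotypical component: with $r=1-\sum_{\sigma\in G}p^{\sigma*}p^\sigma$ it sets $p_{\pm,0}=p_0\pm\frac{r}{4m}$ and $p_{\pm,\chi}=p_\chi$ for $\chi\ne0$, computes
\begin{equation*}
1-\sum_{\sigma\in G}p_\pm^{\sigma*}p_\pm^\sigma
= r\Bigl(1-\tfrac12\,\hbox{Re}(p_0)-\tfrac{r}{16m}\Bigr),
\end{equation*}
and certifies that the second factor is sos by noting it is \emph{strictly} positive on $\T^d$ and invoking Schm\"udgen's theorem \cite{Schmuedgen1991}. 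Your multiplicative perturbation $p_\pm=p(1\pm i\epsilon f)$ instead produces $f\bigl(1-\epsilon^2f(1-f)\bigr)$, whose second factor you certify by the completely explicit identity $1-\epsilon^2f(1-f)=\epsilon^2(f-\tfrac12)^2+(1-\tfrac{\epsilon^2}{4})$. What your route buys is elementarity: no Positivstellensatz is needed, the certificate is a concrete two-term sum of squares, and the identical perturbation (with the sos verification simply dropped) settles the extremal points of $\calS$, so your part (ii) is an actual argument where the paper only says ``the proof is similar''; what the paper's route buys is a perturbation of lower degree (adding $r/(4m)$ rather than multiplying by $1\pm i\epsilon f$) that touches a single polyphase component. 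Two details you should spell out: membership in $\calS_\sos$ requires, per \eqref{eq:fpsos}, hermitian squares of \emph{$G$-invariant} polynomials --- your certificate satisfies this automatically, since the sos terms of $f$ are $G$-invariant by hypothesis and $f-\tfrac12$ and constants are $G$-invariant, but it deserves a sentence; and distinctness of $p_\pm$ rests on $fp\ne0$ in the integral domain of Laurent polynomials. (Your explicit verification that $p(\bone)=1$ and $f\ge0$ force $f(\bone)=0$, hence $p_\pm(\bone)=1$, addresses a point the paper leaves implicit; the analogous fact $r(\bone)=0$ is needed there as well.)
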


\begin{proof}
Proof of $(i)$: On the one hand, let $q\in \mathcal{S}_\sos$ satisfy
the QMF-condition and assume $q=(1-t)p_1+tp_2$ with $p_1,p_2\in
\mathcal{S}_\sos$, $t\in (0,1)$. From \eqref{eq:SSsos} we conclude that
both $p_1$ and $p_2$ satisfy the QMF-condition in \eqref{eq:QMF}, and
$u=p_1-p_2=0$.
Therefore,
 $q=p_1$ is an
extremal point.

On the other hand, let $p\in \mathcal{S}_\sos$ be an extremal point and let
$r=1-\sum_{\sigma\in G} p^{\sigma *}p^\sigma$.
The isotypical components  $p_\chi$ of $p$, $\chi\in G'$,  satisfy
$$
    m\sum_{\chi\in G'} p_\chi^*p_\chi =\sum_{\sigma\in G} p^{\sigma *}p^\sigma\le 1.
$$
We define the trigonometric polynomials $p_+$ and $p_-$
by their isotypical components
\begin{equation}\label{eq:pplus}
   p_{\pm,0} = p_0\pm \frac{r}{4m},\qquad p_{\pm,\chi}=p_\chi,\quad \chi\ne 0.
\end{equation}
Note that, indeed, $p_0\pm \frac{r}{4m}$ defines an isotypical component,
since both $p_0$ and $r$ are $G$-invariant.
We next show that $p_\pm$ belong to $\mathcal{S}_\sos$.
Note that
\begin{eqnarray*}
  1-m \sum_{\chi \in G'} p_{+,\chi}^* p_{+,\chi}&=&
  1-m \sum_{\chi \in G'} p_{\chi}^* p_{\chi}-\frac{r}{2} \hbox{Re}(p_0)-
  \frac{r^2}{16m} \\ &=& r\left( 1-\frac{1}{2}  \hbox{Re}(p_0)- \frac{r}{16m} \right).
\end{eqnarray*}
By definition of $r$, we have $0\le r\le 1$.
Thus,  $(16m)^{-1}r < 2^{-1}$ and $p_0^*p_0\le 1$,
which imply
$$
  1-\frac{1}{2}  \hbox{Re}(p_0)- \frac{r}{16m} \ge \frac{1}{2}-\frac{r}{16m} >0 \quad
  \hbox{on} \quad \T^d.
$$
This strict positivity, by \cite{Schmuedgen1991}, and the assumption that $r$ is sos yield that
$p_+ \in \calS_\sos$. Analogously,  $p_- \in \calS_\sos$. Since $p=
\frac{1}{2}(p_+ +p_-)$  is extremal,
we conclude that $p=p_+=p_-$ and therefore $r=0$. This shows that $p$
satisfies the QMF condition.

The proof that $\calS$ has the same extremal points is similar to the proof of part (i).
\end{proof}

\begin{Remark}
We have seen that the two sets $\calS_\sos\subset\calS$ are both
convex and their respective extremal points are the same, yet the
inclusion is proper for $d>2$ \cite{CPSS2012}. The following remarks
are an attempt to better understand the geometry of $\calS_\sos$ and $\calS$.

According to the Krein-Milman theorem, any compact convex subset of
$\C[\T^d]$ is the closed convex hull of its extremal points. Although
the convex set $\calS$ is closed, it is easy to see that $\calS$ is
not compact.
So there is no reason to expect that $\calS$ agrees with the closed
convex hull of its extremal points.

For $d>2$, the convex subset $\calS_\sos$ of $\calS$ fails to be
closed. Indeed, let $\calS'$ be the subset of $\calS$ consisting
of all $p\in\C[\T^d]$ for which $p(\bone)=1$ and
$$
 f(z)\>=\>1-\sum_{\sigma\in G}p^{\sigma*}(z)\,p^\sigma(z)\>>\>0\qquad
,z\in\T^d\setminus G,
$$
and for which the Hessian of $f$ at
$\bone$ is positive definite. Then $\calS'\subset\calS_\sos$ by
\cite[Theorem 3.2]{CPSS2012}. But the closure of $\calS'$ is not
contained in $\calS_\sos$. To see this, one can modify the
construction of \cite[Theorem 2.5]{CPSS2012}: Let
$$p_t(z)\>=\>\Bigl(1-t(y_1^2+y_2^2+y_3^2)-c\cdot m(z)\Bigr)\,a(z)$$
where $c$ and $t$ are small positive real numbers and $y_j$, $m(z)$, $a(z)$
are as in \cite{CPSS2012}. When $t$ is small and positive, $p_t$
lies in $\calS'$. But for $t=0$ we have $p_t\notin\calS_\sos$.
\end{Remark}

\section{Bounds on the number of frame generators for box splines}
We use the closedness under multiplication of $\calS_\sos$ to
improve the upper bound in \cite{LaiNam,LS06} for the number $N$ of the
frame generators for box splines. The explicit upper bound for
$N$, see \cite{CPSS2012, LS06}, depends on the length of the sos
decomposition of $f$ and on $m$.

Let $p \in \C[\T^d]$. Denote by $L(p)$ the hermitian sos length of
$\displaystyle f=1-\sum_{\sigma \in G}p^{\sigma *} p^\sigma$,
i.e., the smallest number $r$ such that $f=|h_1|^2+\cdots+|h_r|^2$
with $G$-invariant trigonometric polynomials $h_j\in\C[\T^d]$.  Also, let
$\ell(p)$ be the sos length of $1-p^*p$. As usual we set these
numbers equal to $\infty$, if the respective polynomials are not
sos. Note that $L(p)<\infty$ implies $\ell(p)\le L(p)+m-1<\infty$.

We first prove two auxiliary lemmas.

\begin{Lemma} \label{lem:L(pq)} Let $p,q \in \mathcal{S}$. Then
$$
 L(pq)\le L(p)+ m \ell(q).
$$
\end{Lemma}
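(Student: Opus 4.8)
The plan is to start from the multiplicativity identity \eqref{eq:closedmult}, specialized to $p_1=p$, $p_2=q$, which expresses the defining quantity for $pq$ as
$$1-\sum_{\sigma\in G}(pq)^{\sigma*}(pq)^\sigma \ =\ \Bigl(1-\sum_{\sigma\in G}p^{\sigma*}p^\sigma\Bigr)\ +\ \sum_{\sigma\in G}p^{\sigma*}p^\sigma\,\bigl(1-q^{\sigma*}q^\sigma\bigr).$$
We may assume $L(p)$ and $\ell(q)$ are both finite, since otherwise the asserted inequality is vacuous. By the definition of $L(p)$, the first summand equals $\sum_{j=1}^{L(p)}|h_j|^2$ with $G$-invariant $h_j\in\C[\T^d]$, contributing $L(p)$ hermitian squares of exactly the type counted by $L(pq)$.

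Next I would treat the cross term. Writing $1-q^*q=\sum_{k=1}^{\ell(q)}|g_k|^2$ by the definition of $\ell(q)$, and using that the $G$-action $p\mapsto p^\sigma$ is a ring automorphism of $\C[\T^d]$ commuting with conjugation, one gets $1-q^{\sigma*}q^\sigma=\sum_k|g_k^\sigma|^2$ and $p^\sigma g_k^\sigma=(pg_k)^\sigma$. Hence
$$\sum_{\sigma\in G}p^{\sigma*}p^\sigma\,\bigl(1-q^{\sigma*}q^\sigma\bigr)\ =\ \sum_{k=1}^{\ell(q)}\ \sum_{\sigma\in G}\bigl|(pg_k)^\sigma\bigr|^2.$$
At this stage the cross term is already a sum of $m\,\ell(q)$ hermitian squares; the difficulty, which is the main obstacle, is that the summands $(pg_k)^\sigma$ are \emph{not} $G$-invariant, whereas the length $L(pq)$ counts only squares of $G$-invariant polynomials.

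The key step that resolves this is the polyphase identity \eqref{def:p_chi,tilde_p_chi}, applied separately to each trigonometric polynomial $w_k:=pg_k$. With the isotypical decomposition $w_k=\sum_{\chi\in G'}(w_k)_\chi$ and the $G$-invariant polyphase components $\widetilde{(w_k)}_\chi=z^{\alpha_\chi}(w_k)_\chi$, it gives
$$\sum_{\sigma\in G}\bigl|(w_k)^\sigma\bigr|^2\ =\ m\sum_{\chi\in G'}\bigl|\widetilde{(w_k)}_\chi\bigr|^2,$$
so each inner orbit sum is a sum of $|G'|=|G|=m$ hermitian squares of the $G$-invariant polynomials $\sqrt{m}\,\widetilde{(w_k)}_\chi$. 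Summing over $k$ turns the cross term into a sum of at most $m\,\ell(q)$ such squares. Combining this with the $L(p)$ invariant squares from the first summand exhibits $1-\sum_{\sigma\in G}(pq)^{\sigma*}(pq)^\sigma$ as a sum of $L(p)+m\,\ell(q)$ hermitian squares of $G$-invariant trigonometric polynomials, whence $L(pq)\le L(p)+m\,\ell(q)$. The only point requiring care is precisely this $G$-invariance bookkeeping: the symmetrization \eqref{def:p_chi,tilde_p_chi} is what converts each non-invariant orbit into $m$ invariant squares without inflating the count.
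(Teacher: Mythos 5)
Your proof is correct and takes essentially the same route as the paper's: you use the same splitting $1-\sum_{\sigma\in G}(pq)^{\sigma*}(pq)^\sigma=\bigl(1-\sum_{\sigma\in G}p^{\sigma*}p^\sigma\bigr)+\sum_{\sigma\in G}p^{\sigma*}p^\sigma\bigl(1-q^{\sigma*}q^\sigma\bigr)$, expand $1-q^{\sigma*}q^\sigma$ via an sos of length $\ell(q)$, and then invoke the polyphase identity \eqref{def:p_chi,tilde_p_chi} applied to the products $pg_k$ (the paper's $p\tau_j$) to convert each $G$-orbit sum into $m$ hermitian squares of $G$-invariant polynomials. The invariance bookkeeping you highlight is exactly what the paper's identity $\sum_{\sigma\in G}(p\tau_j)^{\sigma*}(p\tau_j)^\sigma=m\sum_{\chi\in G'}(\widetilde{p\tau}_j)_\chi^*(\widetilde{p\tau}_j)_\chi$ accomplishes, so there is no gap.
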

\begin{proof} Note that we only need to prove the claim
in the case $L(p)< \infty$ and $\ell(q) < \infty$.  Then,
trivially,
$$
 1-q^*q=\sum_{j=1}^{\ell(q)} \tau_j^* \tau_j \quad \hbox{on} \quad
 \T^d
$$
and also, for $\sigma \in G$,
$$
 1-q^{\sigma *}q^\sigma=\sum_{j=1}^{\ell(q)} \tau_j^{\sigma *} \tau_j^\sigma
 \quad \hbox{on} \quad \T^d.
$$
Note that
$$
 \sum_{\sigma \in G} p^{\sigma *}p^\sigma (1- q^{\sigma *}
 q^\sigma)=\sum_{j=1}^{\ell(q)} \sum_{\sigma \in G} (p\tau_j)^{\sigma *}
 (p\tau_j)^\sigma = m \sum_{j=1}^{\ell(q)} \sum_{\chi \in G'} (\widetilde{p\tau}_j)_\chi^{*}
 (\widetilde{p\tau}_j)_\chi
$$
has a $G-$invariant sos of length $m \ell(q)$. Thus, the claim
follows from
\begin{equation} \label{decomp:Boxsplines}
 1-\sum_{\sigma \in G}(pq)^{\sigma *} (pq)^\sigma=\>\Bigl(1-\sum_{\sigma \in G}
p^{\sigma *} p^\sigma\Bigr)+\sum_{\sigma \in G} p^{\sigma
*}p^\sigma (1- q^{\sigma *}
 q^\sigma).
\end{equation}
\end{proof}

\begin{Lemma} \label{lemma:productG}
Assume that $G_1,\dots,G_r$ are subgroups of $G$ such that the
product map $G_1 \times \cdots \times G_r\to G$ is bijective.
Assume further that for every $j=1,\dots,r$, a polynomial
$p_j\in\C[\T^d]$ is $G_k$-invariant for all $k\ne j$
and is such that $1-\sum_{\sigma\in
G_j} |p_j^{\sigma}|^2$ is sos. Then $p:=p_1 \cdots p_r \in
\mathcal{S}_{\rm sos}$.
\end{Lemma}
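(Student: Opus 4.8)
The plan is to reduce, via the bijectivity of the product map, the sub-QMF defect of $p$ to a product of the corresponding defects of the factors, then recombine these through a telescoping identity into a single sum of hermitian squares; the only delicate point will be to keep all squares $G$-invariant, as \eqref{eq:fpsos} demands. First I would exploit the group structure. Writing $G$ additively, the action obeys $(g^\sigma)^\tau=g^{\sigma+\tau}$, and bijectivity of $G_1\times\cdots\times G_r\to G$ means every $\sigma\in G$ has a unique expression $\sigma=\sigma_1+\cdots+\sigma_r$ with $\sigma_j\in G_j$. Since $p_j$ is $G_k$-invariant for all $k\ne j$, this gives $p_j^\sigma=p_j^{\sigma_j}$, hence
\[
 p^\sigma\;=\;(p_1\cdots p_r)^\sigma\;=\;\prod_{j=1}^r p_j^{\sigma_j},
\]
and, using the unique decomposition once more,
\[
 \sum_{\sigma\in G}|p^\sigma|^2\;=\;\prod_{j=1}^r S_j,\qquad S_j:=\sum_{\sigma_j\in G_j}|p_j^{\sigma_j}|^2 .
\]
The normalization $p(\bone)=\prod_j p_j(\bone)=1$ follows from that of the factors.

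Next I would telescope. With the $S_j$ above one has
\[
 1-\prod_{j=1}^r S_j\;=\;\sum_{j=1}^r\Bigl(\prod_{k=1}^{j-1} S_k\Bigr)\,(1-S_j),
\]
with empty product equal to $1$. Each $1-S_j$ is a sum of hermitian squares by hypothesis and each $S_k$ is visibly a sum of squares; since sums and products of sums of squares are again sums of squares, this already shows that $f=1-\sum_{\sigma\in G}|p^\sigma|^2$ is sos, and in particular $f\ge0$ on $\T^d$. What the naive argument does not deliver, however, is that the squares be $G$-invariant, which is exactly what \eqref{eq:fpsos} requires.

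The main obstacle is therefore this invariance upgrade, and I would settle it with an averaging lemma whose proof is the polyphase computation \eqref{def:p_chi,tilde_p_chi}: if $H\le G$ has a complementary subgroup $H^{c}$ (so $G=H\oplus H^{c}$) and $g\in\C[\T^d]$ is $H^{c}$-invariant, then $\sum_{\tau\in H}|g^\tau|^2$ is a sum of $G$-invariant hermitian squares. Indeed, decomposing $g=\sum_\chi g_\chi$ into $H$-isotypical components yields $\sum_{\tau\in H}|g^\tau|^2=|H|\sum_\chi|g_\chi|^2$; each $g_\chi$ is again $H^{c}$-invariant, and because $G^*=H^*\oplus(H^{c})^*$ one may choose a monomial $z^{\alpha_\chi}$, trivial on $H^{c}$, so that $z^{\alpha_\chi}g_\chi$ is simultaneously $H$- and $H^{c}$-invariant, hence $G$-invariant.

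Applying the lemma with $H=G_k$ and $g=p_k$ (which is $G_k^{c}$-invariant) shows each $S_k$, and hence each $\prod_{k<j}S_k$, is a sum of $G$-invariant squares. For the factor $1-S_j$ the hypothesis supplies only $G_j$-invariant squares, but each term $T_j:=\bigl(\prod_{k<j}S_k\bigr)(1-S_j)$ is $G$-invariant, being a product of $G$-invariant polynomials, and is a sum of $G_j$-invariant squares $\sum_m|w_m|^2$; averaging over the complement $G_j^{c}$ and applying the lemma with $H=G_j^{c}$ gives
\[
 T_j\;=\;\frac{1}{|G_j^{c}|}\sum_{m}\ \sum_{\tau\in G_j^{c}}|w_m^\tau|^2,
\]
a sum of $G$-invariant squares. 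Summing over $j$ exhibits $f$ as a $G$-invariant sum of hermitian squares, so $p\in\calS_{\sos}$. I expect this last $G$-invariance upgrade to be the crux; everything else is bookkeeping with the direct-sum decomposition of $G$.
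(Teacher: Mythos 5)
Your core argument is precisely the paper's proof: the unique factorization $\sigma=\sigma_1+\cdots+\sigma_r$ plus the invariance hypotheses give $p^\sigma=\prod_{j}p_j^{\sigma_j}$, hence $\sum_{\sigma\in G}|p^\sigma|^2=\prod_j t_j$ with $t_j=\sum_{\sigma_j\in G_j}|p_j^{\sigma_j}|^2$, and the telescoping identity $1-t_1\cdots t_r=\sum_j t_1\cdots t_{j-1}(1-t_j)$ (this is exactly \eqref{eq:sos_with_tj}) exhibits $f$ as a sum of hermitian squares. The paper stops there: the upgrade from ``sos'' to ``sos with $G$-invariant squares'' is not rediscussed, because for a $G$-invariant polynomial such as $f$ it is a standard consequence of averaging over $G$ combined with the polyphase/isotypical trick \eqref{def:p_chi,tilde_p_chi}, and this identification is built into how the paper uses the definition of $\mathcal{S}_{\rm sos}$. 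So your instinct that the invariance deserves comment is reasonable, but it is background rather than the crux.

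Your execution of that upgrade, however, contains a genuine misstep. The hypothesis of the Lemma does \emph{not} supply $G_j$-invariant squares for $1-t_j$; it only says $1-t_j$ is sos, with no invariance whatsoever on the squares. Consequently your claim that $T_j=\bigl(\prod_{k<j}t_k\bigr)(1-t_j)$ is ``a sum of $G_j$-invariant squares $\sum_m|w_m|^2$'' is unjustified: the $w_m$ are products of $G$-invariant polynomials with the arbitrary square roots coming from the sos certificate of $1-t_j$. This matters because your subsequent application of your averaging lemma with $H=G_j^{c}$ requires exactly that each $w_m$ be $(G_j^{c})^{c}=G_j$-invariant, so you are invoking your own lemma with an unverified (and in general false) hypothesis. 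The repair is immediate and uses only what you already proved: apply your lemma with $H=G$, for which $H^{c}=\{0\}$ and the invariance hypothesis on $g$ is vacuous. Since $f$ (or each $T_j$) is $G$-invariant and sos, say $f=\sum_m|w_m|^2$, averaging gives
\begin{equation*}
 f\;=\;\frac{1}{|G|}\sum_{m}\sum_{\tau\in G}|w_m^\tau|^2,
\end{equation*}
and your lemma with $H=G$ converts each inner sum into a sum of $G$-invariant hermitian squares. With that one-line correction your proof is complete and coincides with the paper's, with the invariance bookkeeping made explicit rather than implicit.
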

\begin{proof} By assumption we have
$p^\sigma=p_1^{\sigma_1}\cdots p_r^{\sigma_r}$ whenever
$\sigma_j\in G_j$, $j=1,\dots,r$, and $\sigma=\sigma_1\cdots\sigma_r$.
Therefore,
$$
 1-\sum_{\sigma\in G}|p^\sigma|^2\>=\> 1-\sum_{\sigma_1\in G_1}
 \cdots\sum_{\sigma_r\in G_r}\bigl|p_1^{\sigma_1}\cdots
 p_r^{\sigma_r} \bigr|^2\>=\>1-\prod_{j=1}^r\Bigl(\sum_{\sigma_j\in
 G_j} |p_j^{\sigma_j}|^2\Bigr).
$$
Denoting $t_j:=\sum_{\sigma_j \in G_j}|p_j^{\sigma_j}|^2$, $j=1,
\dots,r$, we get that
\begin{equation}\label{eq:sos_with_tj}
 1-\sum_{\sigma\in G}|p^\sigma|^2\>= 1-t_1\cdots t_r\>=\>\sum_{j=1}^r
 t_1\cdots t_{j-1}(1-t_j)
\end{equation}
is sos.
\end{proof}

We are finally ready to derive the upper bound for the number of
the frame generators for box splines. From now on  we assume that
$M=2I$, hence $G \cong \pi \{0,1\}^d$.

\begin{Proposition} \label{prop:BOX} Let
$$
 p\>=\>\prod_{j=1}^r \Bigl(\frac{1+z^{\theta_j}}2\Bigr)^{\ell_j},
 \quad \theta_j \in \Z^d, \quad \ell_j \in \N,
$$
and assume that $\theta_1,\dots,\theta_d$ span $\Z^d$ modulo $2\Z^d$.
Then
$$
 L(p) \le d+(r-d)2^d.
$$
\end{Proposition}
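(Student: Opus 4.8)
The plan is to factor $p=q\,Q$, where $q=\prod_{j=1}^d\bigl(\frac{1+z^{\theta_j}}2\bigr)^{\ell_j}$ collects the $d$ spanning directions and $Q=\prod_{j=d+1}^r\bigl(\frac{1+z^{\theta_j}}2\bigr)^{\ell_j}$ collects the rest, to prove $L(q)\le d$, and then to absorb $Q$ by a single application of Lemma~\ref{lem:L(pq)}. Write $b_j=\bigl(\frac{1+z^{\theta_j}}2\bigr)^{\ell_j}$ and $c_j=\bigl|\frac{1+z^{\theta_j}}2\bigr|^2$, so that on $\T^d$ one has $0\le c_j\le1$, $\bigl|\frac{1-z^{\theta_j}}2\bigr|^2=1-c_j$, and $|b_j^\sigma|^2$ equals $c_j^{\ell_j}$ or $(1-c_j)^{\ell_j}$ according as the character $\chi_j$ of $G$ determined by $(z^{\theta_j})^\sigma=\chi_j(\sigma)\,z^{\theta_j}$ takes the value $+1$ or $-1$ at $\sigma$. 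The hypothesis that $\theta_1,\dots,\theta_d$ span $\Z^d$ modulo $2\Z^d$ is used exactly once, to say that $\chi_1,\dots,\chi_d$ form a basis of the character group, equivalently that $\sigma\mapsto(\chi_1(\sigma),\dots,\chi_d(\sigma))$ is a bijection $G\to\{\pm1\}^d$. The normalization $p(\bone)=1$ is immediate.

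For the base block I first note that, since $\sigma\mapsto(\chi_j(\sigma))_j$ is a bijection onto $\{\pm1\}^d$, the quantity $\sum_{\sigma\in G}|q^\sigma|^2=\sum_{\sigma\in G}\prod_{j=1}^d|b_j^\sigma|^2$ factors as $\prod_{j=1}^d t_j$ with $t_j=c_j^{\ell_j}+(1-c_j)^{\ell_j}$; these are precisely the quantities of Lemma~\ref{lemma:productG} for the order-two subgroups dual to $\chi_1,\dots,\chi_d$, so $q\in\calS_\sos$ and the telescoping identity \eqref{eq:sos_with_tj} reads $1-\sum_{\sigma\in G}|q^\sigma|^2=\sum_{j=1}^d t_1\cdots t_{j-1}(1-t_j)$. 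The additional input I supply is that each summand is a \emph{single} $G$-invariant hermitian square. Indeed $t_j$ and $1-t_j$ are non-negative trigonometric polynomials in $z^{\theta_j}$ that are invariant under $z^{\theta_j}\mapsto-z^{\theta_j}$ (this flip exchanges $c_j$ and $1-c_j$), hence involve only even powers of $z^{\theta_j}$, i.e. are polynomials in $z^{2\theta_j}$; the Riesz--Fej\'er lemma then writes them as $|\rho_j(z^{2\theta_j})|^2$ and $|g_j(z^{2\theta_j})|^2$ for suitable analytic $\rho_j,g_j$. As any function of $z^{2\theta_j}$ is $G$-invariant ($\sigma$ acts through $\chi_j(\sigma)^2=1$), the summand $t_1\cdots t_{j-1}(1-t_j)=\bigl|\rho_1\cdots\rho_{j-1}\,g_j\bigr|^2$ is a single $G$-invariant square, and summing over $j$ gives $L(q)\le d$.

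For the remaining block I need only the ordinary sos length of $1-Q^*Q=1-\prod_{j=d+1}^r c_j^{\ell_j}$. Telescoping again and using that each partial product $\prod_{l=d+1}^{j-1}c_l^{\ell_l}=\bigl|\prod_{l=d+1}^{j-1}b_l\bigr|^2$ is a square while each $1-c_j^{\ell_j}$ is a non-negative univariate trigonometric polynomial in $z^{\theta_j}$, hence a single square by Riesz--Fej\'er, I obtain $\ell(Q)\le r-d$. Applying Lemma~\ref{lem:L(pq)} to the pair $(q,Q)$ then yields $L(p)=L(qQ)\le L(q)+m\,\ell(Q)\le d+2^d(r-d)$, as claimed, since $m=2^d$.

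The main obstacle is the sharp count $L(q)\le d$: Lemma~\ref{lemma:productG} by itself only certifies $q\in\calS_\sos$ with no control on the number of squares, and a generic Riesz--Fej\'er factor of $t_j$ would fail to be $G$-invariant. The decisive point is the evenness of $t_j$ and $1-t_j$ in $z^{\theta_j}$, which forces a square root depending only on $z^{2\theta_j}$ and therefore automatically $G$-invariant; this is exactly what prevents the $d$ spanning directions from costing $2^d$ apiece. A secondary subtlety is that $Q\notin\calS$ (for $d\ge2$ even a single $b_j$ violates the sub-QMF condition), so Lemma~\ref{lem:L(pq)} does not apply verbatim. This is harmless, since its proof uses only that the first factor lies in $\calS_\sos$ and that $\ell(Q)<\infty$; the latter already forces $1-Q^*Q\ge0$, so the second summand of \eqref{decomp:Boxsplines} is non-negative and the length estimate goes through unchanged.
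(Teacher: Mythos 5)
Your proof is correct and follows essentially the same route as the paper's: factor off the $d$ spanning directions, get $L(q)\le d$ from the product-group structure of Lemma \ref{lemma:productG} together with Fej\'er--Riesz roots that are $G$-invariant because they depend only on $z^{2\theta_j}$, and then absorb the remaining $r-d$ factors through Lemma \ref{lem:L(pq)} with $m=2^d$. The only cosmetic differences are that the paper absorbs $p_{d+1},\dots,p_r$ one at a time by induction (using $\ell(p_j)=1$) rather than in a single application with $\ell(Q)\le r-d$, while your explicit evenness argument for the $G$-invariance of the Fej\'er--Riesz factors, and your observation that Lemma \ref{lem:L(pq)} really only needs $L(q)<\infty$ and $\ell(Q)<\infty$ (since the second factor fails the sub-QMF condition), carefully justify two points the paper treats tersely.
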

\begin{proof}
Define polynomials
$$
 p_{j}(z)=\Bigl(\frac{1+z^{\theta_{j}}}2\Bigr)^{\ell_{j}}, \quad
 j=1,\ldots, r,
$$
which we each treat as a univariate polynomial in the variable
$u_j:=z^{\theta_j}$, respectively. We first show that $L(p_1
\cdots p_d)=d$. For $\theta_j \in \Z^d$ define
$\ol\theta_j=\theta_j+2\Z^d\in \Z^d/2\Z^d$. By assumption,
$\ol\theta_{1},\dots,\ol\theta_{d}$ is a basis of $\Z^d/2\Z^d$.
Let $b_1,\dots,b_d\in\Z^d$ such that $\ol b_1,\dots, \ol b_d$ is
the dual basis of $\Z^d/2\Z^d$. For $j=1,\dots,d$ let $G_j \subset
G$ be the subgroup of order two generated by $\pi b_j$. Then the
group $G$ is the direct product of $G_1,\dots,G_d$. Moreover, for
$j \ne k$ and $\sigma \in G_k$, we have ${\bf e}^{i\sigma \cdot
\theta_{j}} =1$ and, thus, the polynomial $p_j$ is invariant under
$G_k$. Note next that the non-negative polynomials
$$
 t_{j}:=\sum_{\sigma \in G_j} |p_{j}^\sigma|^2=\left|\frac{1+z^{\theta_j}}{2} \right|^{2\ell_j}+
 \left|\frac{1-z^{\theta_j}}{2} \right|^{2\ell_j}, \quad j=1,\ldots,d,
$$
and  $1-t_{j}$ are $G-$invariant.  By the F\'ejer-Riesz
Lemma, $t_{j}$ and $1-t_{j}$, $j=1, \ldots,d$, are therefore single $G-$invariant
squares in the variable $u_j$. Therefore, Lemma
\ref{lemma:productG} implies that $q=p_1 \cdots p_d \in
\mathcal{S}_{sos}$ and, by \eqref{eq:sos_with_tj},  $L(q) = d$.

Next, note that the Riesz-Fejer Lemma implies that $\ell(p_j)=1$,
$j=d+1, \ldots, r$. Thus, by Lemma \ref{lem:L(pq)}, in particular
by identity \eqref{decomp:Boxsplines}, we get $L(qp_{d+1}) \le
L(q)+2^d$, where $|G|=2^d$. The claim follows then by induction on
$n$ for the polynomials $q  \displaystyle \prod_{j=d+1}^n
p_j$, $n=d+2, \ldots, r$.
\end{proof}

\begin{Remark} By the constructive algorithm in \cite{LS06} and by Proposition
\ref{prop:BOX}, for $d=2$ and $r=2,3,4,5\dots$, we get the upper
bounds $6,10,14,18 \dots$ for  the number of tight frame
generators for the corresponding $r-$directional box splines. This
improves the previously known upper bounds from \cite{LaiNam}, namely
$11,19$, for $d=2$ and $r=3,4$. Note that our upper bounds are
 not sharp, in general. For example, for
$$
 p(z_1,z_2)\>=\> \Bigl(\frac{1+z_1}2\Bigr)\Bigl(\frac{1+z_2}2\Bigr)
 \Bigl(\frac{1+z_1z_2}2\Bigr),
$$
i.e. $d=2$ and $r=3$, a tight wavelet frame with only
$6$ frame generators was constructed in \cite{LS06}, and in subsection \ref{subsec:B111}
we construct  a tight wavelet frame with only
$5$ frame generators.
\end{Remark}

\medskip

\section{System theory and wavelet tight frames} \label{sec:system_theory_twf}

In this section we establish a connection between constructions of
tight wavelet frames and some fundamental results from system
theory. For the reader's convenience, we include
 an overview of the
relevant results from system theory in   section \ref{appendix}.

Here, instead of working with trigonometric polynomials, we
consider algebraic polynomials $p \in \C[z]$. We write $M=(m_1,\ldots,m_d)\in\Z^{d\times d}$
and define the isotypical components $p_\chi$ and
the polyphase
components $\tilde p_\chi$, $\chi \in G'$,  similarly to
\eqref{def:p_chi,tilde_p_chi}. Hence,  the polyphase components
of $\displaystyle p=\sum_{\beta \in \Z^d}  p(\beta) z^\beta$  are
\[ \tilde p_\chi=z^{-\alpha_\chi}p_\chi =
\sum_{\beta\in\Z^d} p(\alpha_\chi+M\beta) z^{M\beta}.
\]
Therefore, we consider $\tilde p_\chi$ as
polynomials in the variable $\xi=z^M:=(z^{m_1},\ldots,z^{m_d}) $,
due to the identity $\xi^\beta=z^{M\beta}$.

Define the vector-valued analytic function $f_p: \C^d \rightarrow
\C^m$ by
\begin{equation}\label{eq:def_fp}
 f_p(\xi)=(m^{1/2} \tilde{p}_\chi(\xi))_{\chi \in G'}.
\end{equation}
Then, in the polarized version with variables $\xi,\eta\in\C^d$, we have
$$
   1-m\sum_{\chi \in G'}
\tilde{p}_{\chi}(\eta)^* \tilde{p}_\chi(\xi)= 1- f_p(\eta)^*
f_p(\xi).
$$
We assume that the analytic function $f_p$ satisfies $\|f_p(\xi)\|
\le 1$ for all $\xi$ in the polydisk
$$
 \D^d=\left\{ \xi=(\xi_1,\ldots,\xi_d) \in \C^d \ : \
|\xi_j|< 1, \ j=1,\ldots,d \right\}.
$$
If $\|f_p(\xi)\|=1$ on $\T^d$, then $f_p$ is called \emph{inner}.
The   requirement that either $\|f_p(\xi)\|\le 1$ or $\|f_p(\xi)\|=1$
states that the trigonometric polynomial
$p|_{\T^d}$ satisfies either the sub-QMF \eqref{eq:subqmf} or QMF \eqref{eq:QMF}
condition,
respectively.

It is then natural to ask, if such functions $f_p$ possess the
decomposition
\begin{equation} \label{eq:Schur_Agler_representation:inner}
 1-f_p(\eta)^* f_p(\xi)=
 q_0(\eta)^*q_0( \xi)+\sum_{j=1}^d (1-\xi_j \bar{\eta_j})q_j(\eta)^* q_j(\xi)
\end{equation}
with polynomial maps $q_j: \C^d \rightarrow \C^{N_{j}}$, $N_{j}
\in \N$ and $j=0,\ldots,d$. If we consider $\xi=\eta\in\T^d$ in
\eqref{eq:Schur_Agler_representation:inner}, then the last sum
disappears, and with $q_0=(h_1,\ldots,h_{N_0})^T$, we obtain the sos decomposition
in \eqref{eq:fpsos} with $G$-invariant trigonometric polynomials $h_1,\ldots,h_{N_0}$,
$$
  1-\sum_{\sigma\in G}p^{\sigma *}p^\sigma=
    1-f_p(\xi)^*f_p(\xi)= \sum_{j=1}^{N_0} h_j(\xi)^*h_j(\xi), \quad \xi=z^M.
$$
In other words, by construction of
the decomposition \eqref{eq:Schur_Agler_representation:inner} on $\D^d$, we prove that
the trigonometric polynomial $p|_{\T^d}$ is in $\mathcal{S}_\sos$ and, in addition,
 we have the sos-decomposition \eqref{eq:fpsos} of sos-length $N_0$.
Moreover, we connect the
bilinear decomposition \eqref{eq:Schur_Agler_representation:inner}
 with the
realization formula
$$
   \begin{pmatrix}f_p\\q_0\end{pmatrix}(\xi)=
    A+BE(\xi)(I-DE(\xi))^{-1} C,\qquad \xi\in \D^d,
$$
in Theorem \ref{th:Agler_Ball_Trent}(c) and
obtain a parameterized version (in terms of the isometry $\begin{pmatrix}A&B\\C&D\end{pmatrix}$) of the decomposition \eqref{eq:fpsos}.

This motivates us to study the properties of the set
$$
 \mathcal{S}_{A}:=\{p \in \C[z] \ : \
  f_p  \
 \hbox{satisfies \eqref{eq:Schur_Agler_representation:inner}}\}.
$$
Note that, for $p\in \mathcal{S}_{A}$, the function $(f_p,q_0)^T :
\C^d\to \C^{m+N_0}$ is inner and in the Schur-Agler class, see
Theorem \ref{th:Agler_Ball_Trent} with $X=Y=\C$.

Unfortunately, if the corresponding trigonometric
polynomial $p|_{\T^d}$ is in $\mathcal{S}_{sos}$, then we do not
necessarily have $p \in \mathcal{S}_A$. The following example
illustrates this observation.

\begin{Example}\label{ex:druryb}
Let $g(z)=z_1^3+z_2^3+z_3^3-3z_1z_2z_3$,
$z \in \D^3$, be the polynomial in \eqref{Druryex}. The maxima of
$|g|$ are at
$$
z_1=z_2=e^{2\pi i /3}z_3,
\qquad z_3\in \T,
$$
and
$$ z_1=z_2=e^{-2\pi i /3}z_3,
\qquad  z_3\in \T,
$$
and permutations thereof.
We select
$$
 z_1=z_2=e^{2\pi i /3},\quad z_3=1,  
$$
where $|g(z_1,z_2,z_3)|=\|g\|_{\infty,\bar \D^3}=3 \sqrt{3}$.
We define another polynomial
$$
 q(z)=\frac{g(e^{2\pi i /3}z_1,e^{2\pi i /3}z_2,z_3)}{g(e^{2\pi i /3},e^{2\pi i /3},1)}
=\frac{1}{3(1+e^{\pi i/3})}
\left(z_1^3+z_2^3+z_3^3+3e^{\pi i /3}z_1z_2z_3\right)
$$
so that $q(\bone)=1$, $\|q\|_{\infty,\bar\D^3}=1$ and
$\|q(T_1,T_2,T_3)\|=\frac{2}{\sqrt{3}}$ with the appropriately
rotated commutative contractions $T_1$, $T_2$ and $T_3$ for which
$\|g(T_1,T_2,T_3)\|=6$, see subsection
\ref{subsec:Appendix_several_variables}. Next, for the dilation
matrix $M =2I$, we define
$$
 \xi=(\xi_1,\xi_2,\xi_3)=(z_1^2, z_2^2, z_3^2) \in \D^3
$$
and the polynomial $p \in \C[z]$  by
$$
 p(z)= 8^{-1} q(z^2) \sum_{\chi \in G'}  z^{\alpha_\chi}, \quad z \in
 \D^3,\quad
 \alpha_\chi \in \Gamma=\{0,1\}^3.
$$
The corresponding column vector $f_p: \C^3 \rightarrow \C^8$ of
the polyphase components of $p$ is given by
$$
 f_p(\xi)=8^{-1/2}q(\xi)\left( \begin{array}{c} 1 \\ \vdots \\1 \end{array} \right).
$$
The polynomial $p$ satisfies $p(\bone)=1$, and
$$
 1-f_p(\eta)^*f_p(\xi)=1-q(\eta)^*q(\xi)
$$
does not possess the representation in
\eqref{eq:Schur_Agler_representation:inner}, i.e. $p \not \in
\mathcal{S}_A$. We show next that $p|_{\T^3} \in
\mathcal{S}_{sos}$. Note that if we dehomogenize
$$
 q(z^2)=\frac{z_3^6}{3(1+e^{\pi i/3}}
 \left(\frac{z_1^6}{z_3^6}+\frac{z_2^6}{z_3^6}+1+
 3e^{\pi i/3}\frac{z_1^2z_2^2}{z_3^4}\right),
 \quad z \in \T^3,
$$
and set
$$
 y_1=\frac{z_1^2}{z_3^2} \quad \hbox{and} \quad y_2=\frac{z_2^2}{z_3^2},
$$
then  the polynomial $1-q^*q$ in the variables $y_1$ and $y_2$ is
a $2-$dim non-negative polynomial on $\T^2$. Thus, by
\cite{Scheiderer2006}, $p|_{\T^3} \in \mathcal{S}_{sos}$.
\end{Example}

\subsection{Polynomials $p$ with non-negative coefficients}

Despite the difficulties illustrated in example \ref{ex:druryb}, we are able to
describe a large class of analytic polynomials $p$ which belong to the
 set $\mathcal{S}_A$.

\begin{Theorem} \label{th:nonnegative_coeff} Let $p \in \C[z]$ have non-negative
coefficients, and let its polyphase components satisfy
$\widetilde{p}_\chi(\bone)=m^{-1}$, $\chi \in G'$. Then $p \in \mathcal{S}_A$.
\end{Theorem}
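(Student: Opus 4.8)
The plan is to reduce membership in $\mathcal{S}_A$ to a single norm estimate, namely von Neumann's inequality for the vector-valued polynomial $f_p$ against arbitrary commuting contractions, and then to read off the decomposition \eqref{eq:Schur_Agler_representation:inner} from the realization theory collected in Theorem~\ref{th:Agler_Ball_Trent}. Recall from \eqref{eq:def_fp} that $f_p(\xi)=m^{1/2}(\tilde p_\chi(\xi))_{\chi\in G'}$, and that each polyphase component $\tilde p_\chi$ inherits non-negative coefficients from $p$, with $\tilde p_\chi(\bone)=m^{-1}$ by hypothesis. The guiding observation is that a polynomial with non-negative coefficients is contractive not merely on the polydisk but against \emph{every} commuting tuple of contractions; this is exactly what fails in Example~\ref{ex:druryb}, where the complex coefficient $3e^{\pi i/3}$ creates the Drury-type obstruction, and it is what the non-negativity hypothesis is designed to rule out.

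Concretely, I would fix an arbitrary $d$-tuple $T=(T_1,\dots,T_d)$ of commuting contractions on a Hilbert space and estimate $\|f_p(T)\|$. Since $\|T^\beta\|\le\prod_j\|T_j\|^{\beta_j}\le 1$ and the coefficients of $\tilde p_\chi$ are non-negative, the triangle inequality gives $\|\tilde p_\chi(T)\|\le\tilde p_\chi(\bone)=m^{-1}$ for every $\chi$. Because there are $|G'|=m$ characters, this yields
$$
  \|f_p(T)\|^2=\Bigl\|\,m\sum_{\chi\in G'}\tilde p_\chi(T)^*\tilde p_\chi(T)\Bigr\|
  \le m\sum_{\chi\in G'}\|\tilde p_\chi(T)\|^2\le m\cdot m\cdot m^{-2}=1 .
$$
Thus $f_p$ obeys von Neumann's inequality for all commuting contractions and therefore belongs to the Schur--Agler class by Theorem~\ref{th:Agler_Ball_Trent}; the scalar specialisation $T_j=\xi_j$ (with $T_j$ unitary on $\T^d$) recovers the pointwise bound $\|f_p(\xi)\|\le 1$, i.e.\ the sub-QMF condition \eqref{eq:subqmf}, as a by-product.

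It remains to upgrade Schur--Agler membership to the explicit polynomial identity \eqref{eq:Schur_Agler_representation:inner}, and this is where I expect the real work to lie. I would apply Theorem~\ref{th:Agler_Ball_Trent}(c) to realise $f_p$ by an isometric colligation $\begin{pmatrix}A&B\\C&D\end{pmatrix}$; since $f_p$ is a polynomial, the associated observability and reachability spaces are finite-dimensional, so the colligation may be taken finite-dimensional. Dilating this isometry to a unitary one on a finite-dimensional space then produces the inner extension $(f_p,q_0)^T$ together with polynomial maps $q_1,\dots,q_d$, and the Agler decomposition attached to the unitary colligation is precisely \eqref{eq:Schur_Agler_representation:inner}, with $q_0$ absorbing the co-isometric defect; restricting to the diagonal on $\T^d$ forces $q_0(\xi)^*q_0(\xi)=1-f_p(\xi)^*f_p(\xi)=f$, so that $q_0=(h_1,\dots,h_{N_0})$ is automatically an sos certificate for $f$. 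The main obstacle is therefore not the norm estimate, which is immediate, but this last passage: securing a \emph{finite-dimensional}, genuinely polynomial realisation for the polynomial $f_p$ and carrying out the unitary dilation that supplies the boundary term $q_0$ is the step that needs the most care, since the abstract Schur--Agler decomposition is a priori only infinite-dimensional and its kernels need not extend polynomially to the torus.
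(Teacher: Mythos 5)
Your opening estimate is correct and is a nice observation: since the polyphase components inherit non-negative coefficients, the triangle inequality gives $\|\tilde p_\chi(T)\|\le\tilde p_\chi(\bone)=m^{-1}$ for every commuting tuple $T=(T_1,\dots,T_d)$ of contractions, hence $\|f_p(T)\|\le 1$, so $f_p$ satisfies condition (a) of Theorem \ref{th:Agler_Ball_Trent} and lies in the Schur--Agler class. But membership in the Schur--Agler class is \emph{not} the definition of $\mathcal{S}_A$, and the passage between the two --- which you yourself flag as ``the real work'' --- is exactly what your proposal does not supply. Theorem \ref{th:Agler_Ball_Trent}(b)--(c) only produces Hilbert-space-valued analytic kernels $L_k$, respectively a unitary colligation on a possibly infinite-dimensional state space; the set $\mathcal{S}_A$ requires finite-dimensional \emph{polynomial} maps $q_0,\dots,q_d$ as in \eqref{eq:Schur_Agler_representation:inner}, including the defect term $q_0$ that is absent from the pure Agler decomposition. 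Your assertion that ``since $f_p$ is a polynomial, the associated observability and reachability spaces are finite-dimensional, so the colligation may be taken finite-dimensional'' is unsupported: in several variables there is no minimal-realization theory that forces this, and the obstruction is visible already for a constant function $f\equiv c$ with $|c|<1$, whose pure Agler decomposition $1-|c|^2=\sum_k(1-z_k\bar w_k)L_k(w)^*L_k(z)$ necessarily involves infinitely many terms (cf.\ the single-variable discussion in the appendix). The tool that does give polynomial kernels with degree bounds, Cole--Wermer (Theorem \ref{th:CW}), applies only to \emph{inner} polynomials, and $f_p$ is not inner here; to make it inner you must first append a polynomial column $q_0$ with $\|f_p\|^2+\|q_0\|^2=1$ on $\T^d$, i.e.\ produce a finite sos certificate for $1-f_p^*f_p$ --- which is essentially the statement to be proved. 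So your plan is circular at its decisive step.

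For comparison, the paper's proof never invokes operator theory for the existence part; it constructs the decomposition explicitly by elementary linear algebra, and non-negativity of the coefficients is used a second time, in a different way than in your norm estimate. One expands $1-f_p(\eta)^*f_p(\xi)$ using the normalization $\sum_{\chi}\sum_{\alpha}p(\alpha_\chi+M\alpha)=1$; the quadratic part is written as $v(\eta)^*A_0v(\xi)$ where $A_0=\sum_\chi A_{\chi,0}$ and each $A_{\chi,0}$ is weakly diagonally dominant precisely because all $p(\alpha_\chi+M\alpha)\ge 0$, hence positive semi-definite; the remaining part $\sum_{\alpha}(1-\xi^\alpha\bar\eta^\alpha)\sum_\chi p(\alpha_\chi+M\alpha)$ is converted by the telescoping identity for $1-\eta^\alpha$ into $\sum_{j=1}^d(1-\xi_j\bar\eta_j)v_j(\eta)^*A_jv_j(\xi)$ with diagonal matrices $A_j$ having non-negative entries. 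Setting $q_j=\sqrt{A_j}\,v_j$ yields \eqref{eq:Schur_Agler_representation:inner} with explicit finite matrices, which is moreover what makes the subsequent $ABCD$-realization in Corollary \ref{cor:structure_D} and the examples computable. To repair your argument you would have to replace the appeal to abstract realization theory by such an explicit construction (or by a proof that polynomials with non-negative coefficients admit finite-dimensional contractive realizations), since for general polynomials in the Schur--Agler class that finiteness is not available.
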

\begin{proof}
Let $\Gamma \subset \N_0^d$ be a set of representatives of $G'$.
Define the sets
$$
 \cI=\{\alpha \in \Z^d \ : \  p(\alpha) \not=0\} \quad \hbox{and}
 \quad \tilde{\cI}=\{\alpha \in \Z^d \ : \ \exists \gamma \in \Gamma \ \hbox{such that}
 \ \gamma+M\alpha \in \cI\}.
$$
Assume that the index set $\tilde{\cI}$ is linearly ordered, e.g.
by the lexicographical ordering. Also, for $j=1, \ldots, d$, we
define
$$
 n_j=\max\{\alpha_j \ : \  \alpha=(\alpha_1, \ldots, \alpha_d) \in \tilde{\cI}\}
$$
and
\begin{equation} \label{def:I_j}
   \cI_j=\{0,\ldots,n_1\} \times   \ldots \times \{0,\ldots,n_j\}.
\end{equation}
Furthermore, we define the row and column vectors
$$
 \pb_\chi=(p(\alpha_\chi+M\alpha) \ : \ \alpha \in \tilde{\cI}), \quad \chi \in G', \quad
 \hbox{and} \quad v(\xi)=(\xi^\alpha\ :\ \alpha \in \tilde{\cI})^T,
$$
respectively. Define also the monomial vector
\begin{equation}\label{def:v_j}
 v_j(\xi_1, \ldots, \xi_d)=
 (\xi_1^{\beta_1} \cdots \xi_j^{\beta_j}\ :\  \beta=(\beta_1, \ldots, \beta_j) \in  \cI_j)^T.
\end{equation}
Note that $v(\eta)^*=(\bar{\eta}^\alpha \ : \ \alpha \in
\tilde{\cI})$ and $v_j(\eta)^*=(\bar{\eta}^\alpha \ : \ \alpha
\in \cI_j)$ are then  row vectors.

Next we write the polyphase components of $p$ in the vector form
$$
 \tilde{p}_\chi(\xi)=\sum_{\alpha \in \tilde{\cI}} p(\alpha_\chi+M\alpha)
 \xi^{\alpha}=\pb_\chi \cdot v(\xi), \quad \xi \in \C^d.
$$
By Theorem \ref{th:Agler_Ball_Trent}, it suffices to show that
\begin{equation}\label{eq:Aj}
 1 - m\sum_{\chi \in G'} v(\eta)^* \pb_\chi^* \pb_\chi v(\xi) =
 v(\eta)^*A_0 v(\xi)+ \sum_{j=1}^d (1-\xi_j \bar{\eta}_j) v_j(\eta)^* A_j v_j(\xi)
\end{equation}
for $\xi, \eta \in \D^d$ with hermitean positive semi-definite matrices
$A_j$, $j=0, \ldots,d$. Due to $\widetilde{p}_\chi(\bone)=m^{-1}$, we have
$$
 1=\displaystyle \sum_{\chi \in G'} \sum_{\alpha \in \tilde{\cI}}
 p(\alpha_\chi+M\alpha).
$$
Thus, we get
\begin{eqnarray*}
 && 1-m\sum_{\chi\in G'} v(\eta)^* \pb_\chi^* \pb_\chi v(\xi)=
 1-m\sum_{\chi \in G'} \sum_{\alpha,\beta\in \tilde{\cI}} p(\alpha_\chi+M\alpha)
 p(\alpha_\chi+M\beta) \xi^\alpha\overline{\eta}^\beta
 \\
  && = \sum_{\chi \in G'} \Big(
 \sum_{\alpha\in \widetilde{\cI}}p(\alpha_\chi+M\alpha) -m\sum_{\alpha\in \widetilde{\cI}}
 p(\alpha_\chi+M\alpha)^2 \xi^\alpha\overline{\eta }^\alpha \\
 && \hspace{4.5cm} -m  \sum_{\alpha,\beta\in\widetilde{\cI}\atop \alpha \not= \beta}
 p(\alpha_\chi+M\alpha) p(\alpha_\chi+M\beta)
 \xi^\alpha\overline{\eta}^\beta \Big)\\
&& = \sum_{\chi \in G'} \Big(
 \sum_{\alpha\in \widetilde{\cI}}(p(\alpha_\chi+M\alpha) -mp (\alpha_\chi+M\alpha)^2)
 \xi^\alpha\overline{\eta}^\alpha\\
 && \hspace{1cm} -m  \sum_{\alpha,\beta\in\widetilde{\cI}\atop \alpha \not= \beta}
 p(\alpha_\chi+M\alpha)
 p(\alpha_\chi+M\beta)\xi^\alpha\overline{\eta}^\beta
 + \sum_{\alpha\in\widetilde{\cI}} (1- \xi^\alpha\overline{\eta}^\alpha) p(\alpha_\chi+M\alpha) \Big).
\end{eqnarray*}
Define  the $|\widetilde{\cI}| \times |\widetilde{\cI}|$  matrices
$A_{\chi,0}$, $\chi \in G'$, by
$$
 A_{\chi,0}(\alpha,\beta)=
  \begin{cases}
    \displaystyle  p(\alpha_\chi+M\alpha) -mp (\alpha_\chi+M\alpha)^2, & \text{if} \quad  \alpha=\beta, \\
    -m p(\alpha_\chi+M\alpha) p(\alpha_\chi+M\beta), & \text{otherwise},
  \end{cases} \quad \alpha,\beta \in \tilde{\cal I}.
$$
The simple observation
\begin{eqnarray*}
 p(\alpha_\chi+M\alpha)= p(\alpha_\chi+M\alpha) \, m \, \widetilde{p}_\chi(\bone)=
 m \, p(\alpha_\chi+M\alpha) \sum_{\beta \in \tilde{\cal I}} p(\alpha_\chi+M\beta)
\end{eqnarray*}
implies that $A_{\chi,0}$, $\chi \in G'$, are weakly diagonally dominant
and, thus, are positive semi-definite. Therefore,
$$
   1-m\sum_{\chi \in G'} v(\eta)^* \pb_\chi^* \pb_\chi v(\xi)=
   v(\eta)^* A_0 v(\xi) +
   \sum_{\alpha\in \widetilde{\cI}} (1- \xi^\alpha\overline{\eta}^\alpha)
   \sum_{\chi \in G'} p(\alpha_\chi+M\alpha)
$$
with the positive semi-definite matrix $A_0=\displaystyle
\sum_{\chi \in G'} A_{\chi,0}$.
For $j=1, \ldots, d$ and $\beta \in \cI_j$, we also define
\begin{equation}\label{eq:Ibeta}
  \cI(\beta)=\{\alpha\in \tilde{\cI} \ : \  \alpha_k=\beta_k~\textrm{for}~1\le k\le j-1,~~\alpha_j>\beta_j\}.
\end{equation}
Then, due to
$$
   1-\eta^\alpha=1-\eta_1^{\alpha_1}+\eta_1^{\alpha_1}(1-\eta_2^{\alpha_2})+\cdots
   + \eta_1^{\alpha_1}\cdots
   \eta_{d-1}^{\alpha_{d-1}}(1-\eta_d^{\alpha_d}), \  \eta\in\C^d, \ \alpha \in
   \N_0^d,
$$
and
$$
 1-\eta_j^{\alpha_j}=(1-\eta_j)\sum_{k=0}^{\alpha_j-1}\eta_j^k, \quad
 \alpha_j>0,
$$
we obtain
\begin{eqnarray*}
   &&\sum_{\alpha\in \widetilde{\cI}} (1- \xi^\alpha\overline{\eta}^\alpha)  \sum_{\chi \in G'} p(\alpha_\chi+M\alpha)
    = \\  && \hspace{1cm} \sum_{j=1}^d(1-\xi_j\bar{\eta}_j) \sum_{\beta \in \cI_j} (\xi_1,\ldots,\xi_j)^\beta
    (\overline{\eta}_1,\ldots,\overline{\eta}_j)^\beta \sum_{\alpha \in \cI(\beta)} \sum_{\chi \in G'}
     p(\alpha_\chi+M\alpha).
\end{eqnarray*}
Then, we get
\begin{equation} \label{def:A_j}
\sum_{\alpha\in \widetilde{\cI}} (1- \xi^\alpha\overline{\eta}^\alpha)
\sum_{\chi \in G'}
     p(\alpha_\chi+M\alpha)=
  \sum_{j=1}^d(1-\xi_j\bar{\eta}_j) v_j(\eta)^* A_{j} v_j(\xi),
\end{equation}
with diagonal $| \cI_j| \times |\cI_j|$ matrices
$A_j$ whose non-negative diagonal entries are $A_j(\beta,
\beta)=\displaystyle \sum_{\alpha\in \cI(\beta)} \sum_{\chi \in G'} p(\alpha_\chi+M\alpha)$, $\beta
\in \cI_j$.
\end{proof}

\begin{Remark} \label{rem:L0_Lj} Note that the matrices $A_0$ and $A_j$, $j=1, \ldots, d$, in \eqref{def:A_j} define the
polynomial maps $q_j$, $j=0, \ldots, d$, in
\eqref{eq:Schur_Agler_representation:inner} by
\begin{equation} \label{eq:q0xi}
 q_0(\xi)=\sqrt{A_0} v(\xi) \quad \hbox{and} \quad q_j(\xi)=\sqrt{A_j} v_j(\xi).
\end{equation}
Since $A_j$ is diagonal and the entries of $v_j$ belong to $\C[\xi_1,\ldots,\xi_j]$, the vector
$q_j(\xi)$ is a vector of (scaled) monomials in $\C[\xi_1,
\ldots, \xi_j]$, $j=1, \ldots,d$.
\end{Remark}

Theorem \ref{th:nonnegative_coeff} and  Theorem
\ref{th:Agler_Ball_Trent} imply that the maps $f_p$ and  $q_0$
satisfy
\begin{equation} \label{representation_of_F_G_q_0}
 \left(\begin{array}{c} f_p \\ q_0 \end{array} \right)(\xi)
 =A+BE(\xi)(I-DE(\xi))^{-1} C, \quad \xi\in \D^d,
\end{equation}
with an isometry
$$ \left( \begin{array}{cc} A & B\\
                            C & D\\
           \end{array} \right) : \begin{array}{c} \C\\
           \oplus\\
           \C^{|\cI_1|+\ldots+|\cI_d|}\end{array} \longrightarrow \begin{array}{c} \C^{m+r} \\
           \oplus \\
           \C^{|\cI_1|+\ldots+|\cI_d|} \end{array}, \quad
           r=|\tilde{\cI}|,
$$
and the block diagonal matrix
\begin{equation} \label{def:Ez}
 E(\xi)=\hbox{diag}(I_{|\cI_1|}\xi_1, \ldots, I_{|\cI_d|}\xi_d).
\end{equation}
To obtain the contractive representation for $f_p$ one just
deletes the last $r$ rows of $A$ and $B$ and leaves $C$ and $D$
unchanged. In the proof of the following Corollary we define one
possible choice of the matrices $A$, $B$, $C$ and $D$ and study
the properties of $D$.

\begin{Corollary} \label{cor:structure_D} Under assumptions of Theorem
\ref{th:nonnegative_coeff}, there exists a realization
$$
   \left(\begin{array}{c}f_p \\ q_0 \end{array} \right)(\xi) = A+B E(\xi)(I-D E(\xi))^{-1}C, \quad \xi
   \in \D^d,
$$
with   nilpotent matrix $DE(\xi)$.
\end{Corollary}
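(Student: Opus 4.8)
The plan is to take the realization already produced by Theorem~\ref{th:nonnegative_coeff} together with Theorem~\ref{th:Agler_Ball_Trent}, and to read off the promised nilpotency directly from the monomial structure of the state vector. Recall from \eqref{representation_of_F_G_q_0}--\eqref{def:Ez} that the state space is $\C^{|\cI_1|+\cdots+|\cI_d|}$ and that the realization comes from a lurking isometry $\begin{pmatrix}A&B\\C&D\end{pmatrix}$ satisfying $\begin{pmatrix}f_p\\q_0\end{pmatrix}(\xi)=A+BE(\xi)\,w(\xi)$ and $w(\xi)=C+DE(\xi)\,w(\xi)$, where $w=(q_1,\ldots,q_d)^{T}$ is the state vector, so that $w(\xi)=(I-DE(\xi))^{-1}C$. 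By Remark~\ref{rem:L0_Lj} each $A_j$ is diagonal, hence the entry of $w$ indexed by $(j,\beta)$, $\beta\in\cI_j$, is the scaled monomial $\sqrt{A_j(\beta,\beta)}\,\xi^{\beta}$ (see \eqref{def:v_j}, \eqref{eq:q0xi}), and $w$ is a polynomial vector of total degree $n:=\max\{|\beta|:\beta\in\cI_j,\ 1\le j\le d\}<\infty$.

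First I would introduce a grading on the state space: assign to the basis vector $e_{(j,\beta)}$ the total degree $|\beta|$. Since $E(\xi)=\mathrm{diag}(I_{|\cI_1|}\xi_1,\ldots,I_{|\cI_d|}\xi_d)$ is diagonal, it preserves each coordinate and hence this grading. The crucial observation is that the defining identity $DE(\xi)\,w(\xi)=w(\xi)-C$ forces $D$ to be \emph{strictly grading--raising}. Indeed, in $E(\xi)w(\xi)$ the coordinate $(k,\beta)$ carries the monomial $\xi^{\beta+e_k}$, while in $w(\xi)$ the coordinate $(j,\gamma)$ carries $\xi^{\gamma}$; matching equal monomials in the identity can only pair a source $e_{(k,\gamma-e_k)}$ (of degree $|\gamma|-1$) with a target $e_{(j,\gamma)}$ (of degree $|\gamma|$), so every nonzero entry of $D$ raises the degree by exactly one. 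This is precisely the telescoping identity $1-\xi^{\alpha}=\sum_{j}(1-\xi_j)\,\xi_1^{\alpha_1}\cdots\xi_{j-1}^{\alpha_{j-1}}\sum_{k}\xi_j^{k}$ already exploited in the proof of Theorem~\ref{th:nonnegative_coeff} to produce \eqref{def:A_j}.

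Since $E(\xi)$ preserves the grading and $D$ raises it by one, the product $DE(\xi)$ raises it by one as well; as the grading takes only the finitely many values $0,1,\ldots,n$, it follows that $(DE(\xi))^{\,n+1}=0$ for every $\xi\in\D^d$, which is the desired nilpotency. As an independent check I would expand the Neumann series $w(\xi)=\sum_{k\ge0}(DE(\xi))^{k}C$ and note that $(DE(\xi))^{k}C$ is exactly the homogeneous part of $w$ of degree $k$ in $\xi$; since $w$ is a polynomial of degree $n$, this part vanishes for $k>n$, giving $(DE(\xi))^{\,n+1}C=0$ and, on the reachable subspace, the same nilpotency.

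The main obstacle is bookkeeping rather than substance: the identity $DE(\xi)\,w(\xi)=w(\xi)-C$ pins down $D$ only on the reachable subspace $W=\mathrm{span}\{E(\xi)w(\xi):\xi\in\D^d\}$, and the lurking isometry is free on $W^{\perp}$. I would therefore fix the extension so that $D$ annihilates every coordinate not used as a source above (which is trivially grading--compatible), and then verify that the resulting $\begin{pmatrix}A&B\\C&D\end{pmatrix}$ is still an isometry realizing $\begin{pmatrix}f_p\\q_0\end{pmatrix}$ --- equivalently, that one may pass to a controllable realization without destroying the isometry, so that the nilpotency established on $W$ propagates to the whole state space. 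Two minor points must be dispatched along the way: coordinates $(j,\beta)$ with $A_j(\beta,\beta)=0$ carry the zero monomial and should simply be discarded from the state, and when distinct coordinates $(k,\beta)$ carry the same monomial $\xi^{\beta+e_k}$ the matching above is not unique, but any consistent choice still yields a grading--raising $D$.
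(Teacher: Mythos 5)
Your proposal is correct and is essentially the paper's own argument: the paper likewise determines $D$ from the equation $DE(\xi)g(\xi)=g(\xi)-C$ by matching each monomial of the state vector $g=(q_1,\ldots,q_d)^T$ with a source monomial exactly one degree lower (its Case 1/Case 2 construction), and its conclusion that $D$ is block lower triangular with strictly lower triangular diagonal blocks is precisely your total-degree grading observation written in matrix form. The one point the paper nails down that you gloss over is feasibility of the matching --- it verifies that the chosen source coordinate is genuinely nonzero via the inclusion $\cI(\beta)\subset\cI(\gamma)$, which gives $A_j(\gamma,\gamma)\ge A_i(\beta,\beta)>0$ --- while in exchange it never addresses your (legitimate, but ultimately harmless, since the corollary asserts only a realization, not an isometric one) concern about how $D$ is extended off the reachable subspace.
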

\begin{proof}
Let the matrices $A_j$, $0\le j\le d$, be defined as in the proof
of Theorem \ref{th:nonnegative_coeff}, the sets $\cI_j$ and column
vectors $v_j$ be as in \eqref{def:I_j} and \eqref{def:v_j} (lexicographical ordering),
respectively. Then, by Theorem \ref{th:nonnegative_coeff}, we have
$$
 1-f_p(\eta)^*f_p(\xi)-q_0(\eta)^*q_0(\xi)=
 \sum_{j=1}^d
 (1-\xi_j\bar{\eta}_j) \left(\sqrt{A_j} v_j(\eta)\right)^* \sqrt{A_j}v_j(\xi),
$$
where $q_0(\xi)= \sqrt{A_0}v(\xi)$ is a polynomial map from $\C^d$
into $\C^{r}$, $r=|\tilde{\cI}|$. The existence of the isometry
$\left( \begin{array}{cc} A & B\\C & D\\ \end{array} \right)$ in
\eqref{representation_of_F_G_q_0} is guaranteed by Theorem
\ref{th:Agler_Ball_Trent} part c). Next we explicitly derive one
such possible $ABCD-$representation for $f_p$. Let
$$
 g(\xi)=\left(\begin{array}{c} q_1(\xi) \\ \vdots \\ q_d(\xi) \end{array} \right).
$$
For $E(\xi)$ in \eqref{def:Ez}, from
\begin{equation} \label{ABCD_identity}
 \left( \begin{array}{cc} A & B\\C & D\\ \end{array} \right)
 \left( \begin{array}{c} I \\ E(\xi)g(\xi) \end{array}\right)=
 \left( \begin{array}{c} \left( \begin{array}{c} f_p \\ q_0 \end{array} \right) (\xi) \\ g(\xi) \end{array}\right),
\end{equation}
we immediately get
$$
 A=\left(\begin{array}{c} f_p \\ q_0\end{array} \right)(0) \in
 \C^{m+r} \quad \hbox{and} \quad C=g(0)=\left( \begin{array}{c} q_1(0) \\ \vdots \\ q_d(0)
 \end{array}\right) \in \C^{|\cI_1| + \dots+ |\cI_d|}.
$$
 Note next that to determine $D$ and $B$ in \eqref{ABCD_identity}
we need to solve
\begin{equation} \label{eq:D+B}
 DE(\xi) g(\xi)=g(\xi)-C \quad \hbox{and} \quad  B E(\xi) g(\xi)=\left(\begin{array}{c} f_p \\ q_0\end{array} \right)(\xi)-A.
\end{equation}
We start by determining the entries of the matrix $D$, which we write in the block form
$$
 D=\left(\begin{array}{ccc} D_{11} & \ldots & D_{1d} \\  \vdots &
 & \vdots \\D_{d1} & \ldots & D_{dd} \end{array}\right), \quad D_{ij}
 \in \C^{|\cI_i| \times |\cI_j|}.
$$
We index the entries $D_{ij}(\beta, \gamma)$ in the block $D_{ij}$
according to the lexicographical ordering of $\beta \in \cI_i$ and
$\gamma \in \cI_j$. We first observe that, due to $q_j \in
\C[\xi_1, \ldots, \xi_j]$ (see Remark \ref{rem:L0_Lj}) and by the
first identity in \eqref{eq:D+B}, the matrix $D$ is block lower
triangular. Then, from the first identity in  \eqref{eq:D+B},
for the blocks in the $i$-th row of $D$  we get
$$
 \sum_{j=1}^i \xi_jD_{ij} q_j(\xi)=q_i(\xi)-q_i(0), \quad i=1, \ldots,d,
$$
where, by \eqref{eq:q0xi}, the entries of $q_j(\xi)$ are either
equal to zero or are scaled monomials
$\sqrt{A_j(\beta,\beta)} (\xi_1,\ldots,\xi_j)^\beta$,
$\beta \in \cI_j$. For
each $i=1, \ldots, d$, we proceed as follows. Choose a
non-negative entry in $q_i(\xi)-q_i(0)$. It corresponds to
a non-zero diagonal element $\sqrt{A_i(\beta,\beta)}$ for some $\beta \in \cI_i$.

\begin{description}
 \item[Case 1:] If $\beta=(\beta_1, \ldots, \beta_i)$ with $\beta_i >0$, then
set $j=i$ and $\gamma=(\beta_1, \ldots, \beta_i-1) \in \cI_i$.

\item[Case 2:] If $\beta=(\beta_1, \ldots, \beta_j, 0, \ldots, 0)$ with $j<i$ and
$\beta_j >0$, then
set $\gamma=(\beta_1, \ldots, \beta_j-1) \in \cI_j$.
\end{description}

\noindent By \eqref{eq:Ibeta}, we get that $I(\beta) \subset I(\gamma)$, which implies that $A_j(\gamma,\gamma) \ge A_{i}(\beta,\beta)>0$. Define
$$
 D_{ij}(\beta, \gamma)=\sqrt{\frac{A_i(\beta,\beta)}{A_j(\gamma,\gamma)}}.
$$
Note that, due to the structure of $q_i(\xi)$, the block $D_{ij}$
has at most one non-negative entry $ D_{ij}(\beta, \gamma)$ in each
row. Also, for $i=j$ (Case 1) and
due to $\gamma_i<\beta_i$,
the blocks $D_{ii}$ are lower triangular, with zeros on the main diagonal. This implies
that $DE(\xi)$ is nilpotent.

Similarly, we determine the non-zero elements of the matrix $B$, which we write as a block matrix of the form
$$
 B=\left(\begin{array}{ccc} B_{11}& \dots & B_{1d} \\ B_{21}& \dots & B_{2d}
 \end{array} \right), \quad B_{1j} \in \C^{|G'| \times |\cI_j|},
 \quad B_{2j} \in \C^{|\tilde{\cI}| \times |\cI_j|}.
$$
Recall that the second identity in \eqref{eq:D+B} is of the form
$$
 \left(\begin{array}{c} f_p \\ q_0\end{array} \right)(\xi)-A=
 \left(\begin{array}{c} \left(\displaystyle  m^{1/2}\sum_{\alpha \in \tilde{\cI}}
  p(\alpha_\chi+M \alpha) \xi^\alpha \right)_{\chi \in G'} \\ \sqrt{A_0} v(\xi)\end{array} \right)-A
$$
with $v(\xi)=(\xi^\alpha \ : \ \alpha \in \tilde{\cI} )^T$. By
the same argument as above, for each $\alpha \in \tilde{\cI}$ we
determine $j \in \{1, \ldots, d\}$ and $\gamma \in \cI_j$ such
that $A_j(\gamma, \gamma) > 0$ and $\alpha=(\gamma_1,
\ldots,\gamma_j+1, 0 \ldots, 0)$. Then non-zero entries of
$B_{1,j}$ blocks are defined by
$$
  B_{1j}(\chi,\gamma)=m^{1/2}\frac{ p(\alpha_\chi+M
  \alpha)}{\sqrt{A_j(\gamma,\gamma)}}, \quad \chi \in G'.
$$
Analogously for the blocks $B_{2j}$, $j=1, \ldots,d$.
\end{proof}

\medskip

\subsection{Matrix factorization: univariate case} \label{subsec:example_system}

We consider the univariate case where $M=m\in\N$ and $m\ge 2$. We use the results of
system theory to give an alternative proof of \cite[Theorem
4.1]{HHS} which shows how to construct a tight frame with $m$
generators. With notation in \eqref{eq:def_fp}, this requires us to find
a matrix factorization
\begin{equation}\label{eq:ex1d}
I_m-f_p(\xi)f_p(\xi)^*=U( \xi)U(\xi)^*,\qquad \xi\in\T,
\end{equation}
where $U$ is a polynomial matrix of dimension $m\times m$. Note that
\eqref{eq:ex1d} are the UEP identities for \eqref{eq:UEP}, written in terms of
the polyphase components $f_p$ of $p$ instead of the $G$-shifts $F_p$.
(Passing from the vector $F_p$ to $f_p$ eliminates the dependencies
among the components of $F_p$.)
Then the columns of $U=(u_{\chi,j})_{\chi\in G',\ j=1,\ldots,m}$
define the polyphase components $\tilde a_{j,\chi}=u_{\chi,j}$ of each
trigonometric polynomial $a_j$ in \eqref{eq:UEP}, i.e.
$$
   a_j(z)= \sum_{\chi\in G'} z^{-\alpha_\chi}u_{\chi,j}(\xi),\qquad
    \xi=z^M\in \T.
$$
The following result shows that such a matrix $U$ can be constructed
by the scalar Riesz-Fejer lemma and the adjunction formula
in Proposition \ref{prop:flip_flop}.

\begin{Lemma} Assume that $f:\D\to\C^m$ is a polynomial map with
$\|f(\xi)\| \le 1$ in $\D^1$. Then there exist polynomial
maps $U : \C \rightarrow \C^{m \times m}$ of degree $n=\deg f$
and $k : \C \rightarrow \C^{m\times n}$ of degree less than $n$ such that
$$
 I_m-f(\xi) f(\eta)^*=U(\xi)U(\eta)^*+(1-\xi \bar{\eta})k(\xi)k(\eta)^*,
 \quad \xi,\eta \in \C.
$$
\end{Lemma}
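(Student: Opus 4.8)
The plan is to read the asserted identity as the one-variable, output-side de Branges--Rovnyak decomposition for the vector-valued Schur function $f$: on the diagonal $\xi=\eta\in\T$ it degenerates to the spectral factorization $I_m-f(\xi)f(\xi)^*=U(\xi)U(\xi)^*$ of \eqref{eq:ex1d}, so the matrix polynomial $U$ must be an analytic (polynomial) Fej\'er--Riesz factor of the positive semidefinite $m\times m$ trigonometric polynomial $I_m-ff^*$, while the term $(1-\xi\bar\eta)\,k(\xi)k(\eta)^*$ records the off-diagonal correction. I would produce $U$ and $k$ by different means: the scalar Riesz--Fejer lemma for the former, and the adjunction formula of Proposition \ref{prop:flip_flop} (equivalently, the $d=1$ realization of Theorem \ref{th:Agler_Ball_Trent}) for the latter.

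First I would manufacture $U$. Since $\|f(\xi)\|\le1$ on $\bar\D$, the scalar trigonometric polynomial $1-f^*f=1-\sum_{j=1}^m|f_j|^2$ is nonnegative of degree $n$ on $\T$, so the scalar Riesz--Fejer lemma yields an analytic polynomial $b$ of degree $\le n$ with $|b|^2=1-f^*f$ on $\T$. Then the augmented column $\hat f=(f;b):\C\to\C^{m+1}$ is inner, i.e. $\hat f^*\hat f\equiv1$ on $\T$. For such an inner polynomial column I would invoke the adjunction formula to produce a complementary inner factor: a polynomial matrix $W:\C\to\C^{(m+1)\times m}$ of degree $\le n$ with $\hat f\hat f^*+WW^*\equiv I_{m+1}$ on $\T$. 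Taking $U$ to be the first $m$ rows of $W$ gives exactly $UU^*=I_m-ff^*$ on $\T$, with $\deg U\le n$. This is the step where the system-theoretic input is essential: realizing the inner $\hat f$ through a finite, minimal colligation and passing to its adjoint colligation, the flip-flop of Proposition \ref{prop:flip_flop}, is what makes the complementary factor polynomial rather than merely rational.

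With $U$ in hand, set $N(\xi,\eta):=I_m-f(\xi)f(\eta)^*-U(\xi)U(\eta)^*$, regarded as a matrix of polynomials in $\xi$ and $w:=\bar\eta$. The relation $ff^*+UU^*\equiv I_m$ holds on $\T$ not merely pointwise but as a Laurent-polynomial identity (substituting $w=1/\xi$), so $N$ vanishes on $\{\xi w=1\}$ and is divisible by the irreducible factor $1-\xi\bar\eta$; write $N=(1-\xi\bar\eta)R(\xi,\eta)$. It remains to exhibit the remainder kernel $R$ as positive semidefinite of rank at most $n$, so that it factors as $R(\xi,\eta)=k(\xi)k(\eta)^*$ with $k:\C\to\C^{m\times n}$ of degree $<n$. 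Here I would read $R$ off the same realization: choosing the colligation $\begin{pmatrix}A&B\\C&D\end{pmatrix}$ with $D$ nilpotent, as in Corollary \ref{cor:structure_D}, so that $(I-\xi D)^{-1}$ terminates at degree $n-1$, the observability map $k(\xi)=B(I-\xi D)^{-1}$ is an $m\times n$ polynomial of degree $<n$, and the standard lurking-isometry computation for the completed coisometric system delivers $N=(1-\xi\bar\eta)\,k(\xi)k(\eta)^*$, which is the desired factorization.

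The main obstacle is the positivity and degree control of the remainder $R$, not the existence of $U$: the scalar Riesz--Fejer step and the bare divisibility by $1-\xi\bar\eta$ are routine, but identifying $R$ as a genuine positive, finite-rank reproducing kernel that factors through a map $k$ of degree strictly less than $n$ is exactly what the finite-dimensional realization with nilpotent $D$ supplies. Concretely, the delicate points are (i) arranging a minimal realization so that the state dimension equals $n$ and $D^n=0$, which simultaneously forces $\deg U\le n$ and $\deg k<n$, and (ii) checking that the adjoint (flip-flop) colligation outputs the first $m$ rows $U$ of the complementary factor together with this same $k$, so that the single identity of the Lemma holds on all of $\C^2$ and not only on $\T$. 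Restricting at the end to $\xi=\eta\in\T$ recovers \eqref{eq:ex1d} and confirms consistency.
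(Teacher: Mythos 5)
Your proposal follows essentially the same route as the paper's proof: scalar Riesz--Fej\'er to extend $f$ to an inner column, a finite-dimensional realization with nilpotent $D$ (the paper's Corollary \ref{cor:general_D_nilpotent} --- the univariate statement you actually need, rather than the multivariate Corollary \ref{cor:structure_D} you cite), the adjunction formula of Proposition \ref{prop:flip_flop}, and an isometric completion of the adjoint colligation, with the lurking-isometry computation yielding the identity on all of $\C^2$. Your ``delicate point (ii)'' is exactly how the paper sidesteps the compatibility issue you raise: instead of first fixing a complementary factor $W$ and then matching it to a realization, the paper defines $U^*(\xi)=X+\xi Y k^*(\xi)$ directly from the isometric extension of the same colligation that produces $k(\xi)=B(I-\xi D)^{-1}$, so the two arguments coincide in substance.
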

\begin{proof}
Due to $\|f(\xi)\| \le 1$ in $\D^1$ and by the Riesz-Fejer Lemma, there is
a polynomial $q_0\in\C[\xi]$ of degree $n=\deg f$, such that
$$
   \|f(\xi)\|^2 +|q_0(\xi)|^2 \equiv 1,\qquad \xi\in\T^1.
$$
In other words, the polynomial function
$f_1:=\begin{pmatrix}f\\q_0\end{pmatrix}:\C\to\C^{m+1}$
is inner. By Theorem  \ref{th:CW}, there is a  polynomial map
$q_1: \C \rightarrow \C^n$ such that
$$
 1-f(\eta)^* f(\xi)=q_0(\eta)^*q_0(\xi)+(1-\xi \bar{\eta})q_1(\eta)^*q_1(\xi),
 \quad \xi,\eta \in \D^1.
$$
Corollary \ref{cor:general_D_nilpotent} implies that the inner
function $f_1$ possesses the representation
$$f_1(\xi)=\left( \begin{array}{c}A\\A_0\end{array}\right)+\xi
\left( \begin{array}{c}B\\B_0\end{array}\right)(I-\xi D)^{-1}C,$$
where the matrix
$$  \left( \begin{array}{cc} A & B\\
                            A_0 & B_0 \\
                            C & D\\
           \end{array} \right) : \begin{array}{c} \C\\
           \oplus\\
           \C^{n}\end{array} \longrightarrow \begin{array}{c} \C^{m+1} \\
           \oplus \\
           \C^{n} \end{array}
$$
is isometric and $D\in\C^{n\times n}$ is nilpotent.

Using the adjunction formula of Proposition \ref{prop:flip_flop}, we
obtain
$$
f^*(\xi)=
A^*+\xi C^*k^*(\xi),
$$
where $k(\xi)=B(I-\xi D)^{-1}\in \C^{m\times n}$
is also a polynomial map of degree less than $n$.
(Note that $k$ is denoted by $q_1$ in Proposition \ref{prop:flip_flop}.)
Since the matrix
$\left( \begin{array}{cc} A^* & C^*\\ B^*&D^* \end{array} \right)$ is
contractive, we can choose an extension to an isometry
\begin{equation}\label{eq:isoextension}
   \left( \begin{array}{cc} A^* & C^*\\ X&Y\\ B^*&D^* \end{array} \right)
 \end{equation}
in the following way: we first extend the co-isometry
$\left( \begin{array}{ccc} A^* & A_0^*&C^*\\ B^*&B_0^*&D^* \end{array} \right)$ to a unitary matrix
$$\left( \begin{array}{ccc} A^* & A_0^*&C^*\\ X&X_0&Y\\B^*&B_0^*&D^*
\end{array} \right)$$
and then drop the middle columns indexed by $0$. This shows that the isometric extension
\eqref{eq:isoextension} exists with $X\in \C^{m\times m}$, $Y\in \C^{m\times n}$.
We define the polynomial function $U(\xi)\in \C^{m\times m}$ by
$$
   U^*(\xi)= X+ \xi Y k^*(\xi)
$$
and obtain the claim.
\end{proof}

\subsection{Bivariate example: piecewise linear box-spline}
\label{subsec:B111}

The following simple, but educational, example illustrates the
result of Corollary \ref{cor:structure_D} in the bivariate case,
where $p$ is the polynomial associated with the linear three-directional
box-spline. In particular, it shows how to derive
the $ABCD$-representation of $f_p$.

\begin{Example} \label{ex:B111}
 Let $M=2I_2$, $m=4$, and  consider
 $$
  p(z_1,z_2)=\frac{1}{8} \left(1+z_1+z_2+2z_1z_2+z_1z_2^2+z_1^2z_2+z_1^2z_2^2\right).
 $$
 Let $\xi_j=z_j^2$, $j=1,2$ and $v(\xi)=\left( \begin{array}{cccc}1&\xi_1&\xi_2&\xi_1\xi_2 \end{array} \right)^T$.
 Then
 $$
 f_p(\xi)=\left( m^{1/2} \tilde{p}_\chi(\xi) \right)_{\chi \in G'}=
  \frac{1}{4} \left(\begin{array}{c} 1+\xi_1\xi_2 \\1+\xi_2\\1+\xi_1 \\2 \end{array} \right).
 $$
 Using the construction in the proof of Theorem \ref{th:nonnegative_coeff}, we get
 $$
  1-f_p(\eta)^* f_p(\xi)=v(\eta)^* A_0 v(\xi)+\sum_{j=1}^2
 (1-\xi_j\bar{\eta}_j) v_j(\eta)^* A_j v_j(\xi)
 $$
 with $v_1(\xi)=1$, $v_2(\xi)=\left(\begin{array}{cc}1 & \xi_1\end{array} \right)^T$,
 $$
  A_0=\frac{1}{16}\left(\begin{array}{rrrr}3&-1&-1&-1\\-1&1&0&0\\-1&0&1&0\\-1&0&0&1\end{array} \right),
  \quad A_1=\frac{1}{4} \quad \hbox{and} \quad
  A_2=\frac{1}{8}\hbox{diag}(1,1).
 $$
 Note that the positive semi-definite matrix $A_0$ has rank $3$ and admits the factorization
 $$
  A_0=H_0^TH_0 \quad \hbox{with} \quad
  H_0=\frac{1}{4}\left(\begin{array}{rrrr}1&-1&0&0\\1&0&-1&0\\1&0&0&-1 \end{array} \right).
 $$
 This yields an sos decomposition of length $3$ for $ 1-f_p(\xi)^* f_p(\xi)$ on $\T^2$,
namely
$$
1-f_p(\xi)^* f_p(\xi) =q_0(\xi)^* q_0(\xi) ,\qquad q_0(\xi)=H_0v(\xi)=
 \frac{1}{4} \left(\begin{array}{c}  1-\xi_1\\1-\xi_2\\1-\xi_1 \xi_2
  \end{array} \right).
$$
 It also allows us to extend the vector-function $f_p$ to an inner function
 $$
 f(\xi):= \left(\begin{array}{c} f_p \\ q_0 \end{array} \right)(\xi)=
  \frac{1}{4} \left(\begin{array}{c} 1+\xi_1 \xi_2\\1+\xi_2\\1+\xi_1 \\2
    \\ \hline 1-\xi_1\\1-\xi_2\\1-\xi_1\xi_2
  \end{array} \right)
 $$
and have the bilinear representation
$$
1-f_p(\eta)^* f_p(\xi) =q_0(\eta)^* q_0(\xi) +(1-\bar{\eta}_1\xi_1)
q_1(\eta)^* q_1(\xi)+(1-\bar{\eta}_2\xi_2)
q_2(\eta)^* q_2(\xi),
$$
where
$$
   q_1(\xi)=\sqrt{A_1} v_1(\xi)=\frac{1}{2},\qquad
   q_2(\xi)=\sqrt{A_2} v_2(\xi)=\frac{1}{\sqrt{8}}
    \left(\begin{array}{c} 1\\ \xi_1 \end{array} \right).
$$

Let $g= \left(\begin{array}{c} q_1\\ q_2 \end{array} \right)$.
Next, we use the result of Corollary \ref{cor:structure_D}, and derive the following $ABCD$-decomposition of the inner function
 $$
  f(\xi)=
    A+BE(\xi)\left(I-DE(\xi)\right)^{-1}C,
    \quad E(\xi)=\hbox{diag}(\xi_1,\xi_2,\xi_2),
 $$
 where the block matrix $$ \left( \begin{array}{cc} A & B\\
                            C & D\\
           \end{array} \right)
                    =\frac{1}{4} \left(\begin{array}{c|ccc}
                    1&0&0&2\sqrt{2} \\1&0&2\sqrt{2}&0\\1&2&0&0 \\2&0&0&0 \\
                    1&-2&0&0\\1&0&-2\sqrt{2}&0\\1&0&0&-2\sqrt{2}\\
  \hline
  2 &0&0&0\\ \sqrt{2}&0&0&0 \\ 0&2\sqrt{2}&0&0 \end{array}\right)
$$
is an isometry (see subsection \ref{subsec:multi_linear_systems}
from Appendix for details). The blocks were computed by solving the system
 $$
  \left(\begin{array}{cc}A&B\\C&D\end{array}\right)
    \left(\begin{array}{c}1\\ E(\xi)g(\xi)\end{array}\right)
    =\left(\begin{array}{c}f(\xi)\\g_(\xi)\end{array}\right).
 $$
 Thus, we immediately have $A=f(0)$ and $C=g(0)$.
 Moreover, $g(\xi)=C+DE(\xi)g(\xi)$ uniquely determines  $D$, and, likewise,
$f(\xi)=A+BE(\xi)g(\xi)$ uniquely determines  $B$.
\end{Example}

If we apply the construction of \cite{LS06} for the definition of tight wavelet frames,
we will obtain $7$ trigonometric polynomials $a_1,\ldots,a_7$ which satisfy the UEP
for the given trigonometric polynomial $p$ in Example \ref{ex:B111}.
We next show that, by using a shorter extension to an inner function
by only $2$ additional polynomials, and in
combination with the adjunction formula of
Proposition \ref{prop:flip_flop}, we reduce the number of
trigonometric polynomials to $N=5$. Moreover, all the corresponding frame generators have
small support in $[0,2]^2$ and every mask has at most $7$ nonzero coefficients.
Hereby, we improve the existing constructions of tight wavelet frames
for the three-directional box-spline $B_{111}$
in \cite{CS,CH01,LS06}, where $6$ generators with larger support were constructed.

\begin{Example} \label{ex:B111a}
We let $M=2I$, $m=4$, and
 $$
 f_p(\xi)=\left( m^{1/2} \tilde{p}_\chi(\xi) \right)_{\chi \in G'}=
  \frac{1}{4} \left(\begin{array}{c} 1+\xi_1\xi_2 \\1+\xi_2\\1+\xi_1 \\2 \end{array} \right)
 $$
as in Example \ref{ex:B111}.

First, we make use of
 \cite[Example 5.2]{LS06} and
choose another extension of $f_p$ to an inner function  by
only $2$ polynomials (rather than $3$ in Example \ref{ex:B111}), namely
 $$
 \tilde f(\xi):= \left(\begin{array}{c} f_p \\ \tilde q_0 \end{array} \right)(\xi)=
  \frac{1}{4} \left(\begin{array}{c} 1+\xi_1 \xi_2\\1+\xi_2\\1+\xi_1 \\2
    \\ \hline \frac{\sqrt{6}}{2}(1-\xi_1)\\ \frac{\sqrt{2}}{2}(2-\xi_2-\xi_1\xi_2)
  \end{array} \right).
 $$
Simple computation yields
 $$
  1-f_p(\eta)^* f_p(\xi)-\tilde q_0(\eta)^* \tilde q_0(\xi)
    =
 \frac{1}{4}(1-\xi_1\bar{\eta}_1)+ \frac{1}{32}(1-\xi_2\bar{\eta}_2)
(3+\xi_1+\bar{\eta}_1+3\xi_1\bar{\eta}_1).
 $$
Factorization of the (non-diagonal) semi-definite matrices
$$
  \tilde A_1=\frac{1}{4}\quad \hbox{and} \quad
  \tilde A_2=\frac{1}{32}\begin{pmatrix} 3&1\\1&3\end{pmatrix}
 $$
leads to the bilinear representation
$$
1-f_p(\eta)^* f_p(\xi) =\tilde q_0(\eta)^* \tilde q_0(\xi) +(1-\bar{\eta}_1\xi_1)
\tilde q_1(\eta)^* \tilde q_1(\xi)+(1-\bar{\eta}_2\xi_2)
\tilde q_2(\eta)^* \tilde q_2(\xi),
$$
where
$$
   \tilde q_1(\xi)=\frac{1}{2},\qquad
   \tilde q_2(\xi)=\frac{1}{8}
    \left(\begin{array}{c} 2(1+\xi_1)\\ \sqrt{2}(1-\xi_1) \end{array} \right).
$$
The same steps as in Example \ref{ex:B111} give the following
$ABCD$-decomposition of the inner function
 $$
  \tilde f(\xi)=
    A+BE(\xi)\left(I-DE(\xi)\right)^{-1}C,
    \quad E(\xi)=\hbox{diag}(\xi_1,\xi_2,\xi_2),
 $$
 with isometric block matrix
\begin{equation}\label{eq:B111abcd} \left( \begin{array}{cc} A & B\\
                            C & D\\
           \end{array} \right)
                    =\frac{1}{8} \left(\begin{array}{c|ccc}
                    2&0&4&-4\sqrt{2} \\2&0&4&4\sqrt{2}\\2&4&0&0 \\4&0&0&0 \\
                    \sqrt{6}&-2\sqrt{6}&0&0\\ 2\sqrt{2}&0&-4\sqrt{2}&0\\
  \hline
  4 &0&0&0\\ 2&4&0&0 \\ \sqrt{2}&-2\sqrt{2}&0&0 \end{array}\right).
\end{equation}

Next we use the adjunction formula in subsection \ref{subsec:adjunction_formula}
in order to construct the bilinear decomposition
\begin{equation}\label{eq:B111u}
   I_4-f_p(\xi)f_p( \eta)^*=
    u_0(\xi)u_0( \eta)^*+
    (1-\bar{\eta}_1\xi_1)u_1(\xi)u_1( \eta)^*+
    (1-\bar{\eta}_2\xi_2)u_2(\xi)u_2( \eta)^*.
\end{equation}
For this purpose, we cut the last two rows of $A$ and $B$ in \eqref{eq:B111abcd},
leaving the contractive block matrix
\begin{equation}\label{eq:B111abcdnew} \left( \begin{array}{cc} \tilde A & \tilde B\\
                            C & D\\
           \end{array} \right)
                    =\frac{1}{8} \left(\begin{array}{c|ccc}
                    2&0&4&-4\sqrt{2} \\2&0&4&4\sqrt{2}\\2&4&0&0 \\4&0&0&0 \\
  \hline
  4 &0&0&0\\ 2&4&0&0 \\ \sqrt{2}&-2\sqrt{2}&0&0 \end{array}\right)
\end{equation}
which represents
\begin{equation}\label{eq:fpnew}
   f_p(\xi)=\tilde A+\tilde B E(\xi)(I-DE(\xi))^{-1} C.
\end{equation}
By the adjunction formula, we obtain
\begin{equation}\label{eq:fpstar}
   f_p^*(\xi)=f_p(\bar\xi)^*=\tilde A^*+C^*E(\xi)u^*(\xi),
\end{equation}
where
$$
  u(\xi)= \tilde B(I-E(\xi)D)^{-1}=
    \frac{1}{2}
    \begin{pmatrix}
    \xi_2&1&-\sqrt{2}\\ 0&1&\sqrt{2}\\ 1&0&0 \\0&0&0
    \end{pmatrix}.
$$
The polynomial map $u=(u_1,u_2):\C\to\C^{4\times 3}$ defines the functions
$u_1$ (first column) and $u_2$ (last two columns) in \eqref{eq:B111u}.
It remains to construct $u_0$. The representation \eqref{eq:fpstar}
refers to the contractive $ABCD$-matrix
$$
   \begin{pmatrix}
    \tilde A^*&C^*\\ \tilde B^*&D^*
    \end{pmatrix}= \frac{1}{8}\left(\begin{array}{cccc|ccc}
    2&2&2&4&4&2&\sqrt{2}\\ \hline
    0&0&4&0&0&4&-2\sqrt{2}\\  4&4&0&0&0&0&0\\-4\sqrt{2}&4\sqrt{2}&0&0&0&0&0
    \end{array}\right).
$$
An extension of this matrix
to an isometry is obtained by simple linear algebra,
adding the following $5$ rows
$$
   \begin{pmatrix}
    T_0&T_1
    \end{pmatrix}=
    \frac{1}{24}\left(\begin{array}{cccc|ccc}
    6\sqrt{3}&6\sqrt{3}&-2\sqrt{3}&-4\sqrt{3}&-4\sqrt{3}&-2\sqrt{3}&-\sqrt{6}\\
    0&0&-12\sqrt{2}&0&0&12\sqrt{2}&0\\
    0&0&0&12\sqrt{2}&-12\sqrt{2}&0&0\\
    0&0&4\sqrt{6}&-4\sqrt{6}&-4\sqrt{6}&4\sqrt{6}&4\sqrt{3}\\
    0&0&0&0&0&0&12\sqrt{3}
    \end{array}\right).
$$
This extension provides the polynomial map $u_0^*(\xi)=T_0+T_1E(\xi) u^*(\xi)$,
and hence
\begin{eqnarray*}
  u_0(\xi)&=&T_0^*+u(\xi)E(\xi)T_1^*\\ &=&
    \frac{1}{12}\begin{pmatrix}
    \sqrt{3}(3-\xi_1\xi_2)&  3\sqrt{2}\xi_2& - 3\sqrt{2}\xi_1\xi_2 & -\sqrt{6}\xi_1\xi_2& -3\sqrt{6}\xi_2\\
    \sqrt{3}(3-\xi_2)&  3\sqrt{2}\xi_2& 0 & 2\sqrt{6}\xi_2& 3\sqrt{6}\xi_2\\
    -\sqrt{3}(1+\xi_1)& -6\sqrt{2}& - 3\sqrt{2}\xi_1 & \sqrt{6}(2-\xi_1)& 0\\
-2\sqrt{3}& 0   & 6\sqrt{2}&-2\sqrt{6} &0
\end{pmatrix}.
\end{eqnarray*}
Finally, the restriction of $(f_p,u_0)$ to $\T^2$
defines the matrix $U(\xi)$, $\xi=(z_1^2,z_2^2)$, in the UEP identities
$$
I_4-f_p(\xi)f_p(\xi)^*=U( \xi)U(\xi)^*,\qquad \xi\in\T^2.
$$
Hence, the number of columns of $u_0$  and the
degree of $u_0$ determine the number of framelets and their support.
We obtain the following $5$ trigonometric polynomials
$$
    \begin{pmatrix}
    a_1(z)\\ \vdots\\ a_5(z)
\end{pmatrix},
=\frac{1}{24}
    \begin{pmatrix}
    \sqrt{3}(3+3z_1-z_2-2z_1z_2-z_1^2z_2-z_1z_2^2-z_1^2z_2^2)\\
    -3\sqrt{2}(2z_2-z_2^2-z_1z_2^2)\\
    3\sqrt{2}(2z_1z_2-z_1^2z_2-z_1^2z_2^2)\\
    \sqrt{6}(2z_2-2z_1z_2-z_1^2z_2+2z_1z_2^2-z_1^2z_2^2)\\
    -3\sqrt{6}(z_2^2-z_1z_2^2)
\end{pmatrix}.
$$
\end{Example}

\section{Appendix: Multivariate system analysis} \label{appendix}

The investigation of the mask $p$ of a tight wavelet frame
naturally brings into the picture the class of complex polynomials
with a prescribed bound in the polydisk $\D^d$. Their structure
can be better understood from the more general perspective of
bounded analytic functions in the polydisk. Fortunately, there is
a great deal of accumulated knowledge on this topic, especially
arising from a remarkable connection to multivariate system
analysis. Without aiming at completeness, the present appendix
offers a quick introduction to the subject. The results listed
below are used in section \ref{sec:system_theory_twf}.
\bigskip

\subsection{Single variable}
 We collect below some classical results which provide the
starting point for the more intricate structure of bounded analytic functions in the polydisk.

Let $f(z), | f(z)| \leq 1, $ be an analytic function defined in the disk $\D = \{z \in \C; |z|<1\}$. Leaving the case of a constant function aside, we can assume
that $|f(z)|<1$ in the disk, and define the function $g(z) =  \frac{1 + f(z)}{1-f(z)},$ so that $\Re g(z) \geq 0$ for all $|z|<1$. Let
$g_r(z) = g(rz), \ 0 <r<1,$ so that the functions $g_r$ are defined in a neighborhood of the closed disk and $\lim_{r \rightarrow 1} g_r = g$
uniformly on compact subsets of $\D.$ A direct application of Cauchy's formula yields:
$$ g_r(w) = \int_{-\pi}^\pi \frac{e^{i\theta} + w}{e^{i\theta} - w} \frac{\Re g_r (e^{i\theta}) d\theta}{2 \pi} + i \Im g(0).$$
Remark that the measures $d \mu_r = \frac{\Re g_r (e^{i\theta}) d\theta}{2 \pi}$ are non-negative, of uniform mass equal to $\Re g(0)$, hence they form a
compact set in the weak-$*$ topology of measures on the unit torus. By passing to a limit point we obtain a positive measure $\mu$ with the property
\begin{equation}\label{RH}  g(w) = \int_{-\pi}^\pi \frac{e^{i\theta} + w}{e^{i\theta} - w} d\mu(\theta) + i \Im g(0).\end{equation}
Since the trigonometric polynomials are dense in the space of continuous functions on the torus, we infer that the measure
$\mu$ is unique with the above property.

 Formula
(\ref{RH}) is known as the {\it Riesz-Herglotz representation} of all analytic functions with non-negative real part in the disk. Since $\D$ is simply connected,
for any harmonic function $u:\D \longrightarrow \R$ there exists an analytic function $g :\D \longrightarrow \C$ such that $u = \Re g$.
Putting together these observations we have proved the equivalence between the first two statements in the next theorem.

\begin{Theorem}[Riesz-Herglotz] \label{RHT} Let $g : \D \longrightarrow \C$ be an analytic function. The following assertions are equivalent:

a). $\Re g \geq 0$;

b). There exists a positive measure $\mu$ on $\T = \partial \D$, such that (\ref{RH}) holds;

c). The kernel $\frac{g(z) + \overline{g(w)}}{1-z\overline{w}}$ is positive semi-definite on $\D \times \D$.
\end{Theorem}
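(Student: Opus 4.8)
The plan is to establish the three-way equivalence (a) $\Leftrightarrow$ (b) $\Leftrightarrow$ (c) for an analytic $g:\D\to\C$. The implications (a) $\Leftrightarrow$ (b) are already essentially supplied by the Riesz--Herglotz discussion preceding the statement, so the genuine work lies in closing the loop with (c), the positive semi-definiteness of the kernel $\frac{g(z)+\overline{g(w)}}{1-z\overline{w}}$. First I would record that (b) $\Rightarrow$ (a) is immediate, since the Poisson kernel $\Re\frac{e^{i\theta}+w}{e^{i\theta}-w}=\frac{1-|w|^2}{|e^{i\theta}-w|^2}\ge 0$ together with positivity of $\mu$ forces $\Re g\ge 0$; conversely (a) $\Rightarrow$ (b) is exactly the compactness-and-weak-$*$-limit argument carried out in the text, producing the representing measure $\mu$, with uniqueness from density of trigonometric polynomials.

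The crux is therefore (b) $\Rightarrow$ (c) and (c) $\Rightarrow$ (a). For (b) $\Rightarrow$ (c), my approach is to expand the Riesz--Herglotz kernel. A direct computation gives
\begin{equation*}
\frac{1}{1-z\overline{w}}\left(\frac{e^{i\theta}+z}{e^{i\theta}-z}+\overline{\left(\frac{e^{i\theta}+w}{e^{i\theta}-w}\right)}\right)
=\frac{2}{(e^{i\theta}-z)\,(\overline{e^{i\theta}-w})},
\end{equation*}
after clearing denominators and using $|e^{i\theta}|=1$. This is manifestly of the form $2\,k_\theta(z)\overline{k_\theta(w)}$ with $k_\theta(z)=\frac{1}{e^{i\theta}-z}$, hence a positive semi-definite kernel for each fixed $\theta$. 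Integrating against the positive measure $\mu$ and adding the purely imaginary constant $i\,\Im g(0)$ (which cancels in the symmetric combination $g(z)+\overline{g(w)}$) shows that $\frac{g(z)+\overline{g(w)}}{1-z\overline{w}}=\int_{-\pi}^\pi 2\,k_\theta(z)\overline{k_\theta(w)}\,d\mu(\theta)$ is a superposition of positive semi-definite kernels, hence itself positive semi-definite. The main obstacle here is bookkeeping the algebraic identity correctly; it is routine but must be done carefully so the cross term $z\overline{w}$ in the denominator is absorbed exactly.

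For the remaining implication (c) $\Rightarrow$ (a), I would argue that positive semi-definiteness of the kernel on the diagonal $z=w$ already yields $\frac{g(z)+\overline{g(z)}}{1-|z|^2}=\frac{2\Re g(z)}{1-|z|^2}\ge 0$, and since $1-|z|^2>0$ for $z\in\D$ this forces $\Re g(z)\ge 0$ throughout the disk. This closes the cycle (a) $\Rightarrow$ (b) $\Rightarrow$ (c) $\Rightarrow$ (a), establishing the equivalence of all three. The one subtlety worth a remark is that evaluating positive semi-definiteness on the single point $z$ suffices for (c) $\Rightarrow$ (a); no limiting or boundary argument is needed, which keeps this direction short.
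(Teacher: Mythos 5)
Your proof is correct and follows essentially the same route as the paper: the implication (a) $\Rightarrow$ (b) is delegated to the preceding weak-$*$ compactness argument, (b) $\Rightarrow$ (c) uses the identical algebraic identity writing the kernel as $2\int \frac{d\mu(\theta)}{(e^{i\theta}-z)(e^{-i\theta}-\overline{w})}$, i.e.\ a superposition of rank-one positive kernels, and (c) $\Rightarrow$ (a) is the same observation that positive semi-definiteness forces non-negativity on the diagonal. Your added details (the explicit cancellation of $i\,\Im g(0)$ and the redundant direct check of (b) $\Rightarrow$ (a)) are fine but do not change the argument.
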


\begin{proof} $a) \Rightarrow b)$ was proved before. If $b)$ holds true, then
$$ \frac{g(z) + \overline{g(w)}}{1-z\overline{w}} = 2 \int_{-\pi}^\pi \frac{d\mu(\theta)}{(e^{i\theta}-z)(e^{-i\theta}-\overline{w})},$$
whence $c)$ is true. Finally,  $c) \Rightarrow a)$ because a positive semi-definite kernel has
non-negative values on the diagonal.
\end{proof}

It is important to note that {\it any} positive measure $\mu$ on the one-dimensional torus $\T$ can arise in the Riesz-Herglotz parametrization.
Also remark that, from $g(z) =  \frac{1 + f(z)}{1-f(z)}$ we infer
$$ 2 \frac{1-f(z) \overline{f(w)}}{1-z\overline{w}} = \frac{1}{1-f(z)}\frac{g(z) + \overline{g(w)}}{1-z\overline{w}}\frac{1}{1-\overline{f(w)}},$$
hence:

\begin{Corollary} An analytic function $f$ maps the unit disk into itself if and only if the
kernel  $\frac{1-f(z) \overline{f(w)}}{1-z\overline{w}}$ is positive semi-definite, that means
$$ \frac{1-f(z) \overline{f(w)}}{1-z\overline{w}} = \sum_{j=1}^N h_j(z) \overline{h_j(w)},$$
where $h_j(z)$ are analytic functions in the disk, and $N \leq \infty$.
\end{Corollary}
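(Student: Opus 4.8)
The plan is to reduce everything to the Riesz--Herglotz Theorem~\ref{RHT} via the Cayley transform already displayed above. First I would dispose of the trivial case: if $f$ is a unimodular constant, then $1-f(z)\overline{f(w)}\equiv 0$, so the kernel vanishes identically and is trivially positive semi-definite, while $f$ obviously maps $\D$ into itself. So assume $f$ is nonconstant; by the maximum modulus principle $|f(z)|<1$ throughout $\D$, and then $g=\frac{1+f}{1-f}$ is a well-defined analytic function with $\Re g=\frac{1-|f|^2}{|1-f|^2}\ge 0$.

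For the forward implication I would apply the equivalence $a)\Rightarrow c)$ of Theorem~\ref{RHT}: the kernel $\frac{g(z)+\overline{g(w)}}{1-z\overline w}$ is positive semi-definite. The key algebraic step is the factorization identity displayed just before the statement,
$$ 2\,\frac{1-f(z)\overline{f(w)}}{1-z\overline{w}}\>=\>\frac{1}{1-f(z)}\,\frac{g(z)+\overline{g(w)}}{1-z\overline{w}}\,\frac{1}{1-\overline{f(w)}}, $$
which exhibits the $f$-kernel, up to the positive scalar $\tfrac12$, as the $g$-kernel conjugated by the scalar function $\phi(z)=\frac{1}{1-f(z)}$. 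Positive semi-definiteness is stable under such a conjugation: for any finite set of points $z_1,\dots,z_n\in\D$ and scalars $c_1,\dots,c_n$, the substitution $d_j=c_j\phi(z_j)$ converts the quadratic form of the $f$-kernel into the quadratic form of the $g$-kernel evaluated at the $d_j$, which is nonnegative. Hence the $f$-kernel is positive semi-definite.

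For the converse I would simply read off positivity on the diagonal $z=w$: positive semi-definiteness forces $\frac{1-|f(z)|^2}{1-|z|^2}\ge 0$, and since $1-|z|^2>0$ on $\D$, this yields $|f(z)|\le 1$, i.e. $f$ maps $\D$ into itself. Finally, the asserted series representation $\frac{1-f(z)\overline{f(w)}}{1-z\overline w}=\sum_j h_j(z)\overline{h_j(w)}$ is the Kolmogorov (Moore--Aronszajn) factorization of a positive semi-definite kernel: the kernel generates a reproducing kernel Hilbert space, and expanding it in any orthonormal basis $(h_j)$ of that space produces the desired decomposition, with $N\le\infty$.

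I expect the only genuinely delicate point to be ensuring that the factors $h_j$ can be taken \emph{analytic} rather than merely functions on $\D$; this is handled by observing that the kernel is holomorphic in $z$ and anti-holomorphic in $\overline w$, so the associated reproducing kernel Hilbert space consists of holomorphic functions and the orthonormal-basis expansion respects analyticity. Everything else is a routine transfer of positivity through the Cayley transform, with the two boundary/constant degenerate cases checked by hand as above.
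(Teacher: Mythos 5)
Your proof is correct and takes essentially the same route as the paper: the Cayley transform $g=\frac{1+f}{1-f}$, the Riesz--Herglotz equivalence of Theorem~\ref{RHT}, and the displayed conjugation identity relating the $f$-kernel to the $g$-kernel. The paper leaves the remaining details implicit, and you fill them in correctly (stability of positive semi-definiteness under conjugation by $\frac{1}{1-f(z)}$, the diagonal evaluation for the converse, and the Kolmogorov/reproducing-kernel expansion yielding analytic factors $h_j$).
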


Note that, even if $f(z)$ a polynomial, the factors $h_j$ may not be polynomials, or $N$ may be equal to infinity. On one hand, one can
factor $f(z) = f_1(z) f_2(z)$ and use induction based on the identity:
$$ \frac{1- f_1(z) f_2(z) \overline{f_1(w) f_2(w)}}{1-z\overline{w}} =  \frac{1- f_1(z)  \overline{f_1(w) }}{1-z\overline{w}} + f_1(z)  \frac{1-  f_2(z) \overline{f_2(w)}}{1-z\overline{w}} \overline{f_1(w)},$$
having to deal in the end only with a linear factor. But then, even for a constant function $f(z) = c$, the decomposition
$$ \frac{1-|c|^2}{1-z\overline{w}} = (1-|c|^2) \sum_{j=0}^\infty z^j \overline{w}^j$$ contains infinitely many terms.

The natural framework for finitely determined decompositions of the above type
is realized by a class of rational functions, appearing in the celebrated Schur
algorithm, see for instance \cite{FF}.
\bigskip

\subsection{Several variables} \label{subsec:Appendix_several_variables}
The analogue of Riesz-Herglotz formula exists in several
variables, in general on polyhedral or homogeneous domains. The
case of the polydisk was studied by Koranyi and Pukansky
\cite{KP}. For instance they proved that an analytic function
$f(z), z \in \D^d,$ is uniformly bounded $( |f(z)| \leq C, \  z
\in \D^n)$ if and only if the hermitian kernel
$$ \frac{C^2 - f(z)\overline{f(w)}}{\prod_{k=1}^d (1-z_k \overline{w_k})}$$
is positive semidefinite. When compared to the single variable case, this formula turns out to be of limited importance for the expected applications. For instance the celebrated Nevanlinna-Pick
interpolation theorem does not hold for this positive definite kernel, see \cite{AM}.

The subtle distinction between $1$D and $d$D with $d \geq 2$ comes from a celebrated result of von Neumann. To be more precise, let $T \in L(H)$
be a linear bounded contraction $\|T\| \leq 1$ acting on a complex Hilbert space. Let $f(z)$ be a strictly contractive analytic function in the disk.
A direct consequence of Riesz-Herglotz formula yields
$$ I - f(rT)^\ast f(rT) = $$ $$
(I - f(rT)^\ast)^{-1} \int_{\T} (I-\overline{u}T^\ast)^{-1} (I-T^\ast T) (I-{u}T)^{-1} d\mu(u) (I - f(rT))^{-1} \geq 0,$$
where the positivity is in the sense of Hilbert space operators. By passing to limit with $r \rightarrow 1$ and allowing $f$ to be contractive we obtain
{\it von Neumann's inequality}:\\

{\it For every analytic function $f$ defined in a neighborhood of the closed unit disk and Hilbert space contractive operator $T$ one has
$$ \| f(T) \| \leq \|f\|_{\infty, \D}.$$}

Due to an observation of Ando, the above inequality remains true for the bi-disk $\D^2$; but fails for $\D^d$ with $d \geq 3$, see \cite{AM}. The contractive analytic functions $f$
in $\D^d$ which satisfy the multi-variate analogue of von-Neumann inequality
$$ \| f(T)\| \leq \|f \|_{\infty, \D^d},$$
for every commutative tuple $T=(T_1,...,T_d)$ of Hilbert space contractions form the {\it Schur-Agler class} of functions. Examples of contractive functions
in $\D^d, d \geq 3,$ which do not belong to this class were known for a long time, see \cite{AM}. For instance, the following homogeneous polynomial
in three variables
\begin{equation} \label{Druryex}
g(z_1, z_2, z_3) = z_1^3 + z_2^3 + z_3^3 - 3z_1 z_2 z_3
\end{equation}
satisfies
$$ \| g \|_{\infty, \D^3} = 3 \sqrt{3},$$
but there exists a commuting triple $(T_1,T_2,T_3)$ of linear contractions acting on a 8 dimensional Hilbert space, so that
$$ \|g(T_1,T_2,T_3)\| = 6.$$ For details see \cite{Drury}.

A constructive approach revealing the structure of
Schur-Agler functions was completed only during the last decade. The next section
collects some results in this direction.\bigskip

\subsection{Multivariate linear systems} \label{subsec:multi_linear_systems}

The theory of bounded analytic functions in the disk had much to
gain from a natural connection with the control theory of linear
systems. The resulting interdisciplinary field was vigorously
developed during the last forty years, with great benefits for
both sides. The multivariate aspects of bounded analytic functions
(say in the polydisk) seen as transfer functions of linear systems
with multi-time dependence were revealed only during the last
decade, see \cite{BtH} for an excellent survey. We reproduce below
a few fundamental facts of interest for the present work. We deal
exclusively with a state-space formulation, with the explicit
purpose of parametrizing the polynomials (or analytic functions)
we are interested in by structured block-matrices.

The starting point is a quadruple of linear bounded Hilbert space operators $\{ A,B,C,D\}$, acting, as a block matrix on two direct sums of Hilbert spaces:
$$ \left( \begin{array}{cc} D & C\\
                                             B & A\\
                                             \end{array} \right) : \begin{array}{c} H\\
                                             \oplus\\
                                                                                                            X\end{array} \longrightarrow \begin{array}{c}
                                                                                                               H \\
                                                                                                               \oplus \\
                                                                                                               Y \end{array}.$$
Moreover, we decompose $H = H_1\oplus...\oplus H_d$ into a direct
sum and consider the finite difference scheme
      $$ \left( \begin{array}{c} h_1(\alpha + e_1)\\
                                                  \vdots \\
                                                  h_d(\alpha + e_d)\\ \end{array} \right) = D     \left( \begin{array}{c} h_1(\alpha)\\
                                                  \vdots \\
                                                  h_d(\alpha)\\ \end{array} \right)     + C u(\alpha),$$
       $$ y(\alpha) = B h(\alpha) + A u(\alpha), \ \ \ \ \alpha \in \N^d.$$     Above $e_k$ are the generators of the semigroup $\N^d$. In linear system theory language, $u(\alpha)$
       is the input vector, $h(\alpha)$ is the state space vector and $y(\alpha)$ is the output. All vectors running in the respective Hilbert spaces, with $\N^d$ as a multi-time semigroup.
       Let $E(z) = z_1 I_{H_1} \oplus ...\oplus z_d I_{H_d} : H \longrightarrow H$ be regarded as a diagonal operator whose diagonal entries  dependent lineraly on  $z_1, \ldots,z_d \in \C$.
       A great deal of stability analysis of the above finite difference scheme can be read from the associated {\it transfer function}:
       $$ F(z) = A + BE(z) (I - DE(z))^{-1} C,$$
       first defined for small values of $|z|$, and then analytically continued as far as possible. In case the state space $H$ is finite dimensional $D$ is a finite matrix and hence the transfer function is (vector valued) rational.

       The remarkable result which establishes the bridge between contractive analytic functions in the polydisk and linear system theory can be stated as follows, as a combination of an
       older theorem of Agler (see \cite{AM}) and a more recent one due to Ball and Trent \cite{BT}.

\begin{Theorem}[Agler, Ball, Trent] \label{th:Agler_Ball_Trent} Let $X,Y$ be Hilbert spaces
and let $f: \D^d \longrightarrow L(X,Y)$ be an analytic function.
The following are equivalent.

a). For every commutative tuple $T=(T_1,...,T_d)$ of linear
contractive operators acting on a Hilbert space $K$, von-Neumann's
inequality
$$ \sup_{\epsilon_k<1}   \| f(\epsilon_1 T_1,..., \epsilon_d T_d) \| \leq 1,$$
holds;

b). There exist auxiliary Hilbert spaces $H_k$ and analytic
functions $L_k : \D^d \longrightarrow L(X, H_k)$ such that
$$ I - f(w)^\ast f(z) = \sum_{k=1}^d (1-\overline{w_k}z_k) L_k(w)^\ast L_k(z), \ \ \ z, w \in \D^d;$$

c). There exists an auxiliary Hilbert space $H = H_1 \oplus ...\oplus H_d$ and a unitary operator
$$ \left( \begin{array}{cc} A & B\\
                            C & D\\
           \end{array} \right) : \begin{array}{c} X\\
           \oplus\\
           H\end{array} \longrightarrow \begin{array}{c} Y \\
           \oplus \\
           H \end{array},
$$
such that
$$ f(z) = A + BE(z) (I-DE(z))^{-1} C, \ \ z \in \D^d.$$
\end{Theorem}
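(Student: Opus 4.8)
The plan is to make statement $(b)$ the hub and prove the two equivalences $(a)\Leftrightarrow(b)$ and $(b)\Leftrightarrow(c)$. Of the four implications involved, $(c)\Rightarrow(b)$ and $(b)\Rightarrow(a)$ are essentially bookkeeping, while $(a)\Rightarrow(b)$ and $(b)\Rightarrow(c)$ carry the content; the former is where I expect the real difficulty.

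I would first dispatch $(c)\Rightarrow(b)$ by a direct computation. Writing $H(z)=(I-DE(z))^{-1}C$, so that $f(z)=A+BE(z)H(z)$ and $H(z)=C+DE(z)H(z)$, the isometry relations $A^*A+C^*C=I$, $A^*B+C^*D=0$, $B^*B+D^*D=I$ coming from unitarity of $\begin{pmatrix}A&B\\C&D\end{pmatrix}$ yield, after expanding and substituting $C=(I-DE(z))H(z)$, the identity
$$I-f(w)^*f(z)=H(w)^*\bigl(I-E(w)^*E(z)\bigr)H(z).$$
Since $I-E(w)^*E(z)$ is block diagonal with $k$-th block $(1-\overline{w_k}z_k)I_{H_k}$, setting $L_k(z):=P_{H_k}H(z)$ gives exactly the Agler decomposition in $(b)$. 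For $(b)\Rightarrow(a)$ I would substitute the commuting contractive tuple into the hereditary identity of $(b)$: sending $z_k\mapsto\epsilon_k T_k$ (on the right) and $\overline{w_k}\mapsto\epsilon_k T_k^*$ (on the left), which is unambiguous because $f$ is analytic and the $T_k$ commute, produces
$$I-f(\epsilon T)^*f(\epsilon T)=\sum_{k=1}^d L_k(\epsilon T)^*\bigl(I-\epsilon_k^2\,T_k^*T_k\bigr)L_k(\epsilon T)\ \ge\ 0,$$
each summand being nonnegative since $\epsilon_k T_k$ is a strict contraction; hence $\|f(\epsilon_1T_1,\dots,\epsilon_dT_d)\|\le1$, which is $(a)$.

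Next, $(b)\Rightarrow(c)$ is the lurking-isometry argument. Collecting $L(z)=\bigoplus_k L_k(z):X\to H$ and rewriting $(b)$ as
$$f(w)^*f(z)+L(w)^*L(z)=I+\bigl(E(w)L(w)\bigr)^*\bigl(E(z)L(z)\bigr),$$
I observe that the Gram kernel of the domain vectors $\bigl(x,\,E(z)L(z)x\bigr)$ and that of the range vectors $\bigl(f(z)x,\,L(z)x\bigr)$ coincide. Hence the assignment $\bigl(x,E(z)L(z)x\bigr)\mapsto\bigl(f(z)x,L(z)x\bigr)$ extends to a well-defined isometry $V$ from the closed span $\mathcal D\subseteq X\oplus H$ of the domain vectors onto its range in $Y\oplus H$. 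After enlarging the spaces $H_k$ so that the deficiency subspaces match, $V$ extends to a unitary $\begin{pmatrix}A&B\\C&D\end{pmatrix}:X\oplus H\to Y\oplus H$. Reading off its two block rows gives $L(z)=C+DE(z)L(z)$, whence $L(z)=(I-DE(z))^{-1}C$ and $f(z)=A+BE(z)L(z)$, which is the transfer-function formula of $(c)$.

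The substantive implication is $(a)\Rightarrow(b)$, and I expect it to be the main obstacle. I would run a Hahn--Banach/GNS separation. Consider the real cone $\mathcal C$ of hermitian kernels on $\D^d\times\D^d$, valued in the self-adjoint operators on $X$, of the form $\sum_{k=1}^d (1-\overline{w_k}z_k)\,\Gamma_k(w,z)$ with each $\Gamma_k$ a positive semidefinite $L(X)$-valued kernel, and aim to show $K(w,z):=I-f(w)^*f(z)$ lies in the weak-$*$ closure of $\mathcal C$. Were it not, a suitable separating functional would be nonnegative on $\overline{\mathcal C}$ but negative at $K$; the GNS construction attached to this functional yields a Hilbert space on which the coordinate multiplications $T_k:=M_{z_k}$ are \emph{commuting contractions} --- contractivity being precisely the nonnegativity of the functional on the generators $(1-\overline{w_k}z_k)\Gamma_k$ --- while negativity at $K$ forces $\|f(T)\|>1$, contradicting von Neumann's inequality $(a)$. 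Thus $K\in\overline{\mathcal C}$, and a normal-family/finite-point-set argument (test the kernels on finite subsets, factor the resulting positive semidefinite matrices, and pass to a limit) upgrades closure membership to the genuine Agler decomposition $(b)$. The delicate points are the choice of topology making the closure manageable, the verification that the GNS multipliers are \emph{simultaneously} contractive, and, in the operator-valued setting, the passage from the separated positivity statement back to an honest sum-of-squares representation; this is exactly where the Agler and Ball--Trent machinery \cite{AM,BT,BtH} does the heavy lifting.
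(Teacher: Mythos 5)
The paper does not actually prove this theorem: it is quoted as a known result, a combination of an older theorem of Agler (cited from \cite{AM}) and the operator-valued realization theorem of Ball--Trent \cite{BT}, and the text immediately moves on to the relaxed contractive-colligation version. So your proposal cannot be compared against a proof in the paper; what you have done is reconstruct the standard proof from the literature, and for three of the four implications you have done so correctly. The computation for $(c)\Rightarrow(b)$ is right: unitarity of the colligation gives $I-f(w)^\ast f(z)=H(w)^\ast\bigl(I-E(w)^\ast E(z)\bigr)H(z)$ with $H(z)=(I-DE(z))^{-1}C$, and the block-diagonal structure of $E$ turns this into the Agler decomposition. The hereditary substitution for $(b)\Rightarrow(a)$ and the lurking-isometry argument for $(b)\Rightarrow(c)$ (Gram-matrix identity, isometric extension to a unitary after enlarging $H$, then reading off the two block rows to get $L(z)=(I-DE(z))^{-1}C$ and the transfer formula) are exactly the standard arguments and are sound. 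In $(b)\Rightarrow(a)$ you should add a word on why the operator substitution converges --- Cauchy estimates on a polydisk of radius $r$ with $\max_k\epsilon_k<r<1$, together with the local bound $\|L_k(z)\|\le(1-|z_k|^2)^{-1/2}$ that follows from the decomposition itself --- but this is routine.

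The one genuine incompleteness is $(a)\Rightarrow(b)$, which is the substance of Agler's theorem. Your cone-separation/GNS sketch has the right architecture (it is how Agler, and in the operator-valued setting Ball--Trent, proceed), but as written it is a plan rather than a proof: closedness of the cone is only available after restricting to finite subsets of $\D^d$; the GNS multiplication operators must be shown to be well defined, simultaneously contractive, and commuting; and the passage from finite subsets back to a global decomposition needs a normal-families limit. You name these obstacles honestly and defer them to \cite{AM,BT}. Since the paper defers the entire theorem to those same references, your write-up is strictly more self-contained than the source; but if a complete proof were required, $(a)\Rightarrow(b)$ is where the real work remains.
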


The meaning of $f(\epsilon_1 T_1,..., \epsilon_d T_d) \in L(X,Y) \otimes L(K)$ can be made precise by the Riesz-Dunford functional calculus, or by
a formal substitution of $z_k$ by $T_k$ in a power series expansion of the function $f$. Remember that
$E(z) = z_1 I_{H_1} \oplus ...\oplus z_d I_{H_d} : H \longrightarrow H$ is a diagonal operator, linear in the variables $z$.

In practice it is sometimes useful to relax condition c) by asking only that the $2 \times 2$ block operator is contractive. In this case, denoting
$g(z) = (I-DE(z))^{-1}C$, or equivalently $g(z) = C + DE(z)g(z)$,
we find
$$ \left( \begin{array}{cc} A & B\\
                                             C & D\\
                                             \end{array} \right) \left( \begin{array}{c} I\\
                                             E(z) g(z) \end{array} \right) = \left( \begin{array}{c}
                                             f(z) \\
                                              g(z) \end{array} \right).
 $$
                                              Thus,
                                                                                                               $$ \| f(z)\|^2 + \|g(z)\|^2 \leq 1 + \| E(z) g(z)\|^2 \leq 1 + \|g(z)\|^2
                                                                                                               $$
and, therefore, $\|f(z)\| \leq 1$ for all $z \in \D^d$. Since every contractive operator admits a unitary dilation (with the price of increasing the Hilbert space $H$), all functions $f$
constructed above (from a contractive block operator) belong to Schur-Agler's class.

A constructive approach for determining the
$A,B,C,D$ matrices from a function $f(z)$ appears in an early
article by Kummert \cite{Kummert}. See also the monograph \cite{Bose} and the D-module approach to such questions of system theory proposed in \cite{Zerz}.

For the functions $f$ that extend to the closed polydisk, we derive the following defect (from unity) formula.
Due to $E(z)^*E(z)=I$ for $z \in \T^d$, we have, for $z \in \T^d$,
$$ I-f(z)^\ast f(z) = \left( \begin{array}{c} I\\
                                             E(z) g(z) \end{array} \right)^\ast \left( \begin{array}{c} I\\
                                             E(z) g(z) \end{array} \right) -  \left( \begin{array}{c}
                                             f(z) \\
                                             g(z) \end{array} \right)^\ast \left( \begin{array}{c}
                                             f(z) \\
                                             g(z) \end{array} \right)
                                              = $$ $$  \left(T \left( \begin{array}{c}
                                              I \\
                                              E(z) g(z) \end{array} \right)\right)^\ast \left(T \left( \begin{array}{c}
                                              I \\
                                              E(z) g(z) \end{array} \right)\right),
$$
where
\begin{equation}\label{def:T}
I - T^\ast T =     \left( \begin{array}{cc} A & B\\
                                             C & D\\
                 \end{array} \right)^\ast  \left( \begin{array}{cc} A & B\\
                  C & D\\
                  \end{array} \right).
\end{equation}

Summing up, we are led to the following result.

\begin{Theorem} \label{completion to SOS}  Let $f(z) \in L(X,Y)$ be an operator valued polynomial map belonging to the Schur-Agler class. If a contractive block-matrix realization
of $f(z)$ exists and $(I-DE(z))^{-1}C$ is a polynomial, then there are Hilbert spaces $Y_0,Y_1, \ldots, Y_d$ and polynomial maps $q_j(z) \in L(X,Y_j)$ with the property
$$
I - f(w)^\ast f(z) = q_0(w)^\ast q_0(z) + \sum_{k=1}^d (1- \overline{w_k} z_k) q_k(w)^\ast q_k(z), \ \ \ z \in \C^d.$$
   \end{Theorem}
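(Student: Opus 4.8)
The plan is to promote the torus computation recorded just before \eqref{def:T} to a polarized identity valid for all $z,w\in\C^d$; the extra terms $1-\overline{w_k}z_k$ are precisely those that the relation $E(z)^*E(z)=I$ suppressed on $\T^d$. Write $g(z)=(I-DE(z))^{-1}C$, a polynomial map $\C^d\to L(X,H)$ by hypothesis, and decompose it into components $g=(g_1,\dots,g_d)$, $g_k(z)\in L(X,H_k)$, along $H=H_1\oplus\cdots\oplus H_d$. The realization relations $f(z)=A+BE(z)g(z)$ and $g(z)=C+DE(z)g(z)$ can be packaged, with $v(z):=\begin{pmatrix}I\\ E(z)g(z)\end{pmatrix}$, as
$$
\begin{pmatrix}A&B\\ C&D\end{pmatrix}v(z)=\begin{pmatrix}f(z)\\ g(z)\end{pmatrix}.
$$
Since $f$ and $g$ are polynomials, this identity, a priori valid only for small $|z|$, extends to all of $\C^d$.

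First I would polarize. For $z,w\in\C^d$ the boxed identity gives
$$
f(w)^*f(z)+g(w)^*g(z)=v(w)^*\begin{pmatrix}A&B\\ C&D\end{pmatrix}^*\begin{pmatrix}A&B\\ C&D\end{pmatrix}v(z)=v(w)^*(I-T^*T)v(z),
$$
with $T$ the defect operator of \eqref{def:T}. Using $v(w)^*(I-T^*T)v(z)=v(w)^*v(z)-\bigl(Tv(w)\bigr)^*\bigl(Tv(z)\bigr)$ together with $v(w)^*v(z)=I+g(w)^*E(w)^*E(z)g(z)$, and rearranging, this becomes
$$
I-f(w)^*f(z)=g(w)^*\bigl(I-E(w)^*E(z)\bigr)g(z)+\bigl(Tv(w)\bigr)^*\bigl(Tv(z)\bigr).
$$
Because $E(w)^*E(z)=\mathrm{diag}(\overline{w_k}z_k\,I_{H_k})$, the first summand on the right splits as $\sum_{k=1}^d(1-\overline{w_k}z_k)\,g_k(w)^*g_k(z)$.

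I would then set $q_0(z):=Tv(z)$ with $Y_0:=X\oplus H$, and $q_k(z):=g_k(z)$ with $Y_k:=H_k$ for $1\le k\le d$; this is exactly the asserted decomposition. Here $T=(I-\begin{pmatrix}A&B\\ C&D\end{pmatrix}^*\begin{pmatrix}A&B\\ C&D\end{pmatrix})^{1/2}$ is a well-defined bounded operator precisely because the block matrix is contractive. Finally I would verify polynomiality: each $q_k=g_k$ is a component of the polynomial $g$, while $q_0(z)=T\begin{pmatrix}I\\ E(z)g(z)\end{pmatrix}$ is polynomial because $E(z)g(z)$ is a product of the linear map $E(z)$ with the polynomial $g$, and $T$ is a fixed bounded operator.

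The computation itself is routine — indeed most of it is already recorded in the paragraph preceding the statement — so I expect the only genuinely delicate point to be the role of the hypothesis that $(I-DE(z))^{-1}C$ is a polynomial. It enters twice: it is what allows the realization identity, hence the whole decomposition, to be upgraded from a local statement near the origin to an identity of polynomials holding on all of $\C^d$, and it is what guarantees that the resulting maps $q_0,\dots,q_d$ are genuinely polynomial rather than merely analytic. Without it, $(I-DE(z))^{-1}$ would in general be a rational function with poles, and the conclusion would fail.
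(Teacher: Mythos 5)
Your proof is correct and follows essentially the same route as the paper's: both introduce the defect operator $T$ of \eqref{def:T}, feed the vector $\begin{pmatrix}I\\ E(z)g(z)\end{pmatrix}$ through the contractive block matrix, and obtain the decomposition with $q_0 = Tv$ and $q_k = g_k$. The only cosmetic differences are that the paper factors $T$ as $(A_0,B_0)$ through a space $Y_0$ of dimension at most $\mathrm{rank}\,T$ (giving a smaller $Y_0$ than your choice $X\oplus H$) and states the diagonal norm identity, invoking polarization, whereas you carry out the polarized bilinear computation explicitly.
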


   \begin{proof} Let us split the matrix $T$ in \eqref{def:T} as $T = (A_0, B_0) : X \oplus H \longrightarrow Y_0$ where $Y_0$ is an auxiliary Hilbert space of dimension not exceeding the rank of $T$. Consequently, the matrix
   $$ V = \left( \begin{array}{cc}
                     A_0 & B_0\\
                     A&B\\
                     C&D  \end{array} \right)$$
is isometric. Hence
$$ \left( \begin{array}{c}
  q_0\\
 f \end{array} \right) =  \left( \begin{array}{c}
  A_0\\
  A \end{array} \right)        + \left( \begin{array}{c}
  B_0\\
  B \end{array} \right)     E(z)(I - DE(z))^{-1} C  $$
  is an operator valued polynomial  map. Since $V$ is an isometry, denoting $g(z) = (I-DE(z))^{-1}C$, we find
$$
 \| q_0(z) \|^2 + \| f(z)\|^2 + \|   g(z)\|^2 =1+ \| E(z)g(z)\|^2
$$
and the conclusion follows by polarization.
  \end{proof}

In the case $f(z)$ is an inner polynomial, i.e. $\|f(z)\|=1$ on $\T^d$, the second condition in the statement of Theorem \ref{completion to SOS} is automatically satisfied. This fact is stated in the following result proved in \cite{CW}.

\begin{Theorem}[Cole-Wermer] \label{th:CW}
Assume that $X,Y$ are finite dimensional Hilbert spaces and $f(z) \in L(X,Y)$ is a polynomial in the Schur-Agler class such that $f(z)^\ast f(z) = I$ for all $z \in \T^d$. Then
$$
 I - f(w)^\ast f(z) = \sum_{k=1}^d (1- \overline{w_k} z_k) q_k(w)^\ast q_k(z),
$$
where $Y_k$ are finite dimensional Hilbert spaces, $q_k(z) \in L(X, Y_k), 1 \leq k \leq d,$ are
polynomial maps, and
$$
 \max( \deg q_1,\ldots, \deg q_d) < \deg f.
$$
Moreover, the spaces $Y_k$ can be chosen so that  $\dim Y_k \le \dim X \times \dim \C_{\deg f-1}[z]$.
\end{Theorem}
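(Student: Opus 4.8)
The plan is to reduce the statement to Theorem~\ref{completion to SOS} by producing a \emph{finite-dimensional, contractive} realization of $f$ whose transfer data $(I-DE(z))^{-1}C$ is a polynomial of degree less than $N:=\deg f$, and then to use the inner hypothesis to discard the superfluous square $q_0$.

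First I would invoke Theorem~\ref{th:Agler_Ball_Trent}: since $f$ lies in the Schur--Agler class, it admits a realization
$$
 f(z)=A+BE(z)\bigl(I-DE(z)\bigr)^{-1}C,\qquad z\in\D^d,
$$
with $\left(\begin{smallmatrix}A&B\\C&D\end{smallmatrix}\right)$ unitary on $X\oplus H$, $H=H_1\oplus\cdots\oplus H_d$, and $E(z)=\bigoplus_k z_kI_{H_k}$. Writing $g(z)=(I-DE(z))^{-1}C=\sum_\alpha g_\alpha z^\alpha$ and $P_k$ for the projection onto $H_k$, unitarity yields the Agler decomposition
$$
 I-f(w)^*f(z)=\sum_{k=1}^d(1-\overline{w_k}z_k)\,(P_kg(w))^*(P_kg(z)).
$$
If $g$ were already a polynomial of degree $<N$ we would be essentially done; the content of the theorem is that this is forced, after passing to a minimal realization, by $f$ being a polynomial of coordinate degree $N$.

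Second, I would pass to a minimal (reachable and observable) realization by compressing the unitary block matrix to the reachable subspace $\mathcal R=\overline{\operatorname{span}}\{g_\alpha x:\alpha\in\N_0^d,\ x\in X\}$; compressions of contractions are contractions and the transfer function is unchanged. The key computation is that for $\beta\neq0$ the $z^\beta$-coefficient of $f$ equals $f_\beta=B\sum_{k:\beta_k\ge1}P_kg_{\beta-e_k}$. Because $f$ has coordinate degree $\le N$, these coefficients vanish as soon as some $\beta_k>N$; combined with observability this should propagate to $g_\alpha=0$ whenever $\alpha$ has a coordinate $\ge N$, exactly as the Hankel operator of a one-variable polynomial of degree $N$ has finite rank supported in degrees $<N$. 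Consequently $\mathcal R\subset X\otimes\C_{N-1}[z]$ is finite-dimensional, $g$ is a polynomial of degree $<N$, and $DE(z)$ acts nilpotently on $\mathcal R$. I expect this propagation step --- upgrading the contractive Agler realization to one that is simultaneously contractive \emph{and} nilpotent (hence polynomial), with the coordinate-degree bound --- to be the main obstacle, since in several variables nilpotency and contractivity pull in opposite directions and the argument must genuinely use both the Schur--Agler membership and the multivariate observability relations.

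With a contractive realization in hand for which $(I-DE(z))^{-1}C$ is a polynomial, Theorem~\ref{completion to SOS} supplies polynomial maps $q_0,q_1,\dots,q_d$ with
$$
 I-f(w)^*f(z)=q_0(w)^*q_0(z)+\sum_{k=1}^d(1-\overline{w_k}z_k)\,q_k(w)^*q_k(z),
$$
where $q_k=P_kg$ has degree $<N$ and takes values in $H_k$, so that $\dim Y_k=\dim H_k\le\dim X\cdot\dim\C_{N-1}[z]$. Finally I would eliminate $q_0$ using that $f$ is inner: specializing to $w=z\in\T^d$ makes every factor $1-\overline{z_k}z_k=1-|z_k|^2$ vanish, and the left-hand side vanishes as well, leaving $q_0(z)^*q_0(z)=0$, i.e. $q_0\equiv0$ on $\T^d$. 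As $q_0$ is a polynomial and $\T^d$ is a determining set for polynomials, $q_0\equiv0$ identically, which gives the asserted identity together with the degree and dimension bounds.
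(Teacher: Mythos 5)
Your outer steps are sound: compressing a unitary realization to a graded, jointly invariant state subspace does preserve contractivity and the transfer function, and your elimination of $q_0$ at the end (set $w=z\in\T^d$, use innerness, conclude $q_0\equiv 0$ since $\T^d$ is a uniqueness set for polynomials) is correct. But the core of your argument --- the ``propagation step'' that you yourself flag as the main obstacle --- is a genuine gap, and it is exactly where the content of the theorem lies. The one-variable Hankel/observability argument you invoke works because of the semigroup identity $BD^m(D^nC)=BD^{m+n}C$: every observability functional applied to the state $g_n=D^nC$ is itself a Taylor coefficient of $f$, so vanishing of the high coefficients of $f$ places $g_n$, $n\ge N$, in the unobservable subspace. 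For $d\ge 2$ this identity has no analogue. Writing $DE(z)=\sum_k z_k DP_k$, the coefficient $g_\alpha$ is the sum $\sum_w wC$ over \emph{all} noncommutative words $w=(DP_{k_1})\cdots(DP_{k_m})$ whose commutative content is $\alpha$, and the hypothesis gives only $f_\beta=\sum_k BP_k\,g_{\beta-e_k}=0$ for high $\beta$, i.e.\ the vanishing of such total word-sums. An observability functional $BP_{k_0}w'g_\alpha$ applied to a fixed $g_\alpha$ is merely a \emph{partial} sum of the coefficient $f_{\alpha+\mathrm{content}(w')+e_{k_0}}$ (only the words whose suffix has content $\alpha$ contribute), so the vanishing of $f$'s coefficients never yields the vanishing of the individual expressions you need in order to place $g_\alpha$ in the unobservable subspace. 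This is not a technicality: in multidimensional (Givone--Roesser) realization theory, reachability plus observability does not exclude cancellation between $\det(I-DE(z))$ and the numerator, so a minimal contractive realization of a polynomial can a priori have non-polynomial $g=(I-DE(z))^{-1}C$; nilpotency of $DE(z)$ is not a consequence of minimality as it is for $d=1$. Your proof therefore reduces the theorem to an unproven claim that is at least as hard as the theorem itself.

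For comparison, the paper proves the result by a purely function-theoretic argument that bypasses realizations altogether. It starts from the Agler decomposition $I-f(w)^*f(z)=\sum_{k=1}^d(1-\overline{w_k}z_k)\,q_k(w)^*q_k(z)$ with merely \emph{analytic} $q_k$ (part b) of Theorem~\ref{th:Agler_Ball_Trent}), restricts to rays $z=w=r\omega$ with $\omega\in\T^d$, and uses innerness to write $\bigl(I-f(r\omega)^*f(r\omega)\bigr)/(1-r^2)$ as a rational function of $r$ with denominator $1+r$ and growth of order $2\deg f-2$. Averaging over $\omega\in\T^d$ isolates the diagonal series $\sum_{k,\alpha}q_{k,\alpha}^*q_{k,\alpha}r^{2|\alpha|}$, and since these coefficients are positive semidefinite, the rationality and degree bound force $q_{k,\alpha}=0$ for $|\alpha|\ge\deg f$, giving both the degree and the dimension bounds at once. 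The positivity of this zeroth Fourier coefficient is what substitutes for your missing observability argument; if you want to salvage the realization-theoretic route, that is the ingredient you would have to reproduce, and note also that your coordinate-degree bookkeeping would in any case give $\dim Y_k\le\dim X\cdot N^d$ rather than the sharper bound $\dim X\cdot\dim\C_{\deg f-1}[z]$ stated in the theorem.
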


\begin{proof} Fix  $\omega \in \T^d$ and restrict the Agler's decomposition to the ray pointing at
$\omega$
\begin{equation}\label{radial}
I - f(r \omega)^\ast f(r \omega) = (1-r^2) \sum_{k=1}^d q_k(r\omega)^\ast q_k(r \omega), \ \ r <1.
\end{equation}
Since $f(\omega)^\ast f(\omega) = I$, the quotient
\begin{equation}\label{eq:CW}
\frac{I - f(r \omega)^\ast f(r \omega) }{1-r^2} = \frac{1}{1+r}[ f(\omega)^\ast \frac{f(\omega)-f(r\omega)}{1-r}+
                              \frac{f(\omega)^\ast- f(r\omega)^\ast}{1-r} f(r \omega) ]
\end{equation}
is a rational function without poles on the positive semi-axis, with polynomial growth
at infinity of order $2\deg p -2$. The power expansion at zero of the factors $q_k$ is
$$
q_k(r\omega) = \sum_{\alpha \in \N^d} q_{k,\alpha} r^{|\alpha|} \omega^\alpha, \quad
q_{k,\alpha}: X \rightarrow Y_k,
$$
with convergence assured for $0 \leq r <1$.

Next we free $\omega \in \T^d$ and consider the zero-th order Fourier coefficient of the decomposition
(\ref{radial}). The right hand side of \eqref{eq:CW}  is an analytic function in $r$ with
$$
\sum_{k+1}^d \sum_{\alpha \in \N^d} q_{k,\alpha}^\ast q_{k,\alpha} r^{2|\alpha|}
$$
convergent for $r<1$. Moreover, it is rational on the semi-axis $r \in [0,\infty)$
with denominator $(1+r)$ and with polynomial growth
at infinity of order $2\deg p -2$.  Hence, the analytic function, as the function of $r$,
$$
(1+r) \sum_{k+1}^d \sum_{\alpha \in \N^d} q_{k,\alpha}^\ast q_{k,\alpha} r^{2|\alpha|}
$$
is a polynomial. As all its coefficients $q_{k,\alpha}^\ast q_{k,\alpha}$ are non-negative,
we conclude that the operator coefficients $q_{k, \alpha}$ vanish for $|\alpha| \geq \deg p$. This
allows us to choose
which implies that $\dim Y_k \leq \dim X  \cdot \dim \C_{\deg p-1} [z]$.
\end{proof}

\noindent For more details on the above proof and its immediate implications we refer to \cite{CW}.
The following consequence of Theorem \ref{th:CW} is of interest. It states that in the univariate case
the matrix $D$ is nilpotent.

\begin{Corollary} \label{cor:general_D_nilpotent} Assume that $f \in L(\C,\C^m)$ is a polynomial and $f(z)^*f(z)=1$ on $\T^1$. Then
there exists a realization $ f(z)=A+zB(I-zD)^{-1} C$
with an isometry
$$ \left( \begin{array}{cc} A & B\\ C & D\\ \end{array} \right) : \begin{array}{c} \C\\ \oplus\\
 \C^n \end{array} \longrightarrow \begin{array}{c} \C^{m} \\ \oplus \\ \C^n \end{array}, \quad
 n \le \deg f,
$$
and $\hbox{det}(I-zD)^{-1}=1$.
\end{Corollary}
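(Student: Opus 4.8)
The plan is to feed the one-variable Cole--Wermer decomposition into a ``lurking isometry'' construction, and then to force nilpotency of $D$ by taking the state space to be exactly the span of the coefficients of the resulting state trajectory.

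First I would apply Theorem~\ref{th:CW} with $d=1$, $X=\C$ and $Y=\C^m$. Since $f^*f\equiv 1$ on $\T^1$, it yields a polynomial map $q_1:\C\to\C^n$ with $\deg q_1<\deg f$ and $\dim Y_1\le\deg f$ such that
\[
 1-f(w)^*f(z)=(1-\bar w z)\,q_1(w)^*q_1(z),\qquad z,w\in\C.
\]
Moving the two copies of $q_1(w)^*q_1(z)$ to opposite sides gives the Gram identity
\[
 1+\bar w z\,q_1(w)^*q_1(z)=f(w)^*f(z)+q_1(w)^*q_1(z),
\]
which says that the two families
\[
 u(z)=\begin{pmatrix}1\\ z\,q_1(z)\end{pmatrix},\qquad
 v(z)=\begin{pmatrix}f(z)\\ q_1(z)\end{pmatrix}
\]
have identical Gram matrices, $u(w)^*u(z)=v(w)^*v(z)$ for all $z,w$.

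Next I would set $\mathcal H_0:=\mathrm{span}\{q_1(z):z\in\C\}\subseteq\C^n$, i.e.\ the span of the coefficient vectors $c_0,\dots,c_{\deg f-1}$ of $q_1$. Since $z\,q_1(z)$ has the very same coefficient vectors, both $q_1(z)$ and $z\,q_1(z)$ take values in $\mathcal H_0$, so $u(z)\in\C\oplus\mathcal H_0$ and $v(z)\in\C^m\oplus\mathcal H_0$. Equality of Gram matrices means $u(z)\mapsto v(z)$ extends to a linear isometry from $\mathrm{span}\{u(z)\}$ onto $\mathrm{span}\{v(z)\}$; because $\dim(\C^m\oplus\mathcal H_0)\ge\dim(\C\oplus\mathcal H_0)$ (as $m\ge1$), this extends to a full isometry $V=\begin{pmatrix}A&B\\ C&D\end{pmatrix}:\C\oplus\mathcal H_0\to\C^m\oplus\mathcal H_0$ with $Vu(z)=v(z)$. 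Reading the two block rows of $Vu(z)=v(z)$ gives $q_1(z)=C+zD\,q_1(z)$ and $f(z)=A+zB\,q_1(z)$; the first yields $q_1(z)=(I-zD)^{-1}C$, and substituting into the second produces the desired realization $f(z)=A+zB(I-zD)^{-1}C$ with $n=\dim\mathcal H_0\le\deg f$. This parallels the construction in Corollary~\ref{cor:structure_D}, specialized to one variable, and the isometry is exactly the object guaranteed abstractly by Theorem~\ref{th:Agler_Ball_Trent}(c).

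The real content, and the step I expect to be the main obstacle, is nilpotency of $D$. Expanding $q_1(z)=(I-zD)^{-1}C=\sum_{k\ge0}z^kD^kC$ and matching with the polynomial $q_1$ of degree $<\deg f$ forces $c_k=D^kC$ together with $D^kC=0$ for every $k\ge\deg f$. By the choice of $\mathcal H_0$ we have $\mathcal H_0=\mathrm{span}\{c_0,\dots,c_{\deg f-1}\}=\mathrm{span}\{D^kC:k\ge0\}$, so the realization is controllable on $\mathcal H_0$; applying $D^{\deg f}$ to each generator $D^jC$ gives $D^{\deg f}(D^jC)=D^{\deg f+j}C=0$, whence $D^{\deg f}=0$ on $\mathcal H_0$. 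Thus $D$ is nilpotent, so $\det(I-zD)\equiv1$, which is the claimed $\det(I-zD)^{-1}=1$. The one point that must be handled with care is insisting on the minimal (controllable) state space $\mathcal H_0$ rather than all of $\C^n$: a larger state space would realize the same $f$ and $q_1$ but could carry spurious nonzero eigenvalues of $D$, breaking the determinant normalization.
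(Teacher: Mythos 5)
Your proposal is correct and follows essentially the same route as the paper's proof: both invoke the Cole--Wermer theorem to get a polynomial $q_1$ of degree $<\deg f$, take the state space to be the span of its coefficients, and deduce nilpotency of $D$ from the shift relation $zDq_1(z)=q_1(z)-q_1(0)$ on those coefficients. The only difference is presentational -- you make explicit the lurking-isometry construction of the block matrix $\left(\begin{smallmatrix}A&B\\C&D\end{smallmatrix}\right)$, which the paper compresses into the assertion that a minimal $ABCD$-realization exists.
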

\begin{proof}
 As we deal with the univariate case, we know that $f$ belongs to Schur-Agler class. This implies the existence
 of a corresponding minimal $ABCD$-representation. By Theorem \ref{th:CW},
$$
 q_1(z)=(I-zD)^{-1}C=\sum_{\alpha=0}^{\deg f-1} q_1(\alpha) z^\alpha, \quad q_1(\alpha): \C \rightarrow Y_1,
$$
where
$$
 Y_1=\hbox{span} \{q_1(\alpha) \ : \ \alpha=0, \ldots, \deg f-1\}=\C^n, \quad n\le \deg f.
$$
Then the identity $zD q_1(z)=q_1(z)-q_1(0)$ implies
\begin{eqnarray*}
 D q_1(\alpha)&=&q_1(\alpha+1), \quad \alpha=0, \ldots, n-2, \\
 D q_1(n-1)&=&0.
\end{eqnarray*}
Therefore, $D$ is nilpotent and $D^{n}=0$.
\end{proof}

 \subsection{Adjunction formula}  \label{subsec:adjunction_formula}
The class of Schur-Agler functions in the polydisk is closed under Hilbert space conjugation.
The simple adjunction formula below has direct implications to tight wavelet frames, as seen in the body of the present article.

 \begin{Proposition}\label{prop:flip_flop} Let $p(z) \in L(X,Y)$ be a polynomial in the Schur-Agler class of the polydisk $\D^d$:
 $$ p(z) = A + BE(z) (I-DE(z))^{-1})C.$$ Write $p^\ast(\zeta) = p(\overline{\zeta})^\ast.$ Then
 $$ p^\ast(\zeta) = A^\ast + C^\ast E(\zeta) (I-D^\ast E(\zeta))^{-1} B^\ast.$$

 If, in addition $(I-D^\ast E(\zeta))^{-1} B^\ast$ is a polynomial function, then
 $$ I - p(z) p(w)^\ast = q_0(z) q_0(w)^\ast + \sum_{k=1}^d (1-z_k \overline{w_k}) q_k(z) q_k(w)^\ast$$
 with matrix valued polynomial functions $q_0,\ldots , q_d$.
 \end{Proposition}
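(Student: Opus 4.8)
The plan is to dispose of the two assertions in turn: first the purely algebraic adjunction identity for the transfer function, and then the positivity decomposition, which I obtain by feeding $p^\ast$ into Theorem \ref{completion to SOS}. The whole point is that the adjunction identity is precisely the device that trades the \emph{inner}-product decomposition produced by that theorem for the \emph{outer}-product decomposition demanded here.

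For the first identity I would start from $p(\bar\zeta) = A + BE(\bar\zeta)(I-DE(\bar\zeta))^{-1}C$ and take the Hilbert-space adjoint term by term. The key observation is that $E$ is diagonal and linear, so $E(\bar\zeta)^\ast = E(\zeta)$; consequently $(I-DE(\bar\zeta))^\ast = I - E(\zeta)D^\ast$, and taking adjoints turns the realization into
$$p^\ast(\zeta) = p(\bar\zeta)^\ast = A^\ast + C^\ast\,(I-E(\zeta)D^\ast)^{-1}E(\zeta)\,B^\ast.$$
It then remains to commute $E(\zeta)$ past the resolvent via the elementary identity $(I-ED^\ast)^{-1}E = E(I-D^\ast E)^{-1}$, which follows at once from $E(I-D^\ast E) = (I-ED^\ast)E$ (both sides equal $E-ED^\ast E$). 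This yields $p^\ast(\zeta) = A^\ast + C^\ast E(\zeta)(I-D^\ast E(\zeta))^{-1}B^\ast$, the claimed formula. This step is purely formal and the computation is short.

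For the second part the crucial remark is that the block matrix attached to the realization of $p^\ast$ is exactly the adjoint $\left(\begin{smallmatrix}A & B\\ C & D\end{smallmatrix}\right)^\ast = \left(\begin{smallmatrix}A^\ast & C^\ast\\ B^\ast & D^\ast\end{smallmatrix}\right)$ of the one attached to $p$. Hence if $p$ admits a contractive realization, so does $p^\ast$ with $(\tilde A,\tilde B,\tilde C,\tilde D)=(A^\ast,C^\ast,B^\ast,D^\ast)$; moreover the Schur-Agler class is closed under conjugation, so $p^\ast$ again lies in it. The extra hypothesis of the Proposition, that $(I-D^\ast E(\zeta))^{-1}B^\ast$ be a polynomial, is precisely the requirement that $(I-\tilde D E(\zeta))^{-1}\tilde C$ be a polynomial for the realization of $p^\ast$. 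All hypotheses of Theorem \ref{completion to SOS} thus hold for $f:=p^\ast$, producing polynomial maps $q_0,\ldots,q_d$ with
$$I - p^\ast(w)^\ast p^\ast(z) = q_0(w)^\ast q_0(z) + \sum_{k=1}^d (1-\bar w_k z_k)\, q_k(w)^\ast q_k(z).$$
Finally I would convert this inner-product form into the desired outer-product form. Since $p^\ast(w)^\ast = p(\bar w)$ and $p^\ast(z) = p(\bar z)^\ast$, the left side is $I - p(\bar w)p(\bar z)^\ast$. Substituting $z\mapsto\bar w$, $w\mapsto\bar z$ and setting $\hat q_j(\zeta):=q_j(\bar\zeta)^\ast$ (again polynomial maps) carries each $(1-\bar w_k z_k)$ to $(1-z_k\bar w_k)$ and each $q_k(w)^\ast q_k(z)$ to $\hat q_k(z)\hat q_k(w)^\ast$, giving exactly $I - p(z)p(w)^\ast = \hat q_0(z)\hat q_0(w)^\ast + \sum_{k=1}^d(1-z_k\bar w_k)\hat q_k(z)\hat q_k(w)^\ast$.

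I expect the only delicate point to be the bookkeeping of adjoints and variable swaps: tracking which variable is conjugated, ensuring the factor emerges as $(1-z_k\bar w_k)$ rather than $(1-\bar w_k z_k)$, and passing from the inner-product decomposition $I-f(w)^\ast f(z)$ to the outer-product decomposition $I-p(z)p(w)^\ast$. Nothing here is analytically hard once the adjunction formula of the first part is available, since that formula is exactly what makes the exchange legitimate.
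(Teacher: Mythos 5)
Your proof is correct. There is nothing in the paper to compare it against line by line: Proposition \ref{prop:flip_flop} is stated without proof (the paper calls it ``the simple adjunction formula'' and passes directly to a remark), so your job was to supply the missing argument, and you did, using exactly the intended machinery. The first part is the routine computation you describe: $E(\overline\zeta)^\ast=E(\zeta)$ because $E$ is diagonal with scalar entries, and the resolvent is moved past $E(\zeta)$ via $(I-E D^\ast)^{-1}E=E(I-D^\ast E)^{-1}$. For the second part, your key observation --- that $p^\ast$ is realized by the adjoint block matrix $\bigl(\begin{smallmatrix}A^\ast&C^\ast\\B^\ast&D^\ast\end{smallmatrix}\bigr)$, which is contractive whenever the original block matrix is, and that the proposition's extra hypothesis is precisely the polynomiality hypothesis of Theorem \ref{completion to SOS} for this adjoint realization --- is exactly the mechanism the paper itself deploys when it applies the proposition in Example \ref{ex:B111a}, extending $\bigl(\begin{smallmatrix}\tilde A^\ast&C^\ast\\\tilde B^\ast&D^\ast\end{smallmatrix}\bigr)$ to an isometry to produce $u_0$. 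Your final conversion is also sound: both sides of the Agler-type identity are polynomial in $z$ and conjugate-polynomial in $w$, so the identity holds on all of $\C^d\times\C^d$, and the substitution $z\mapsto\overline w$, $w\mapsto\overline z$ together with $\hat q_j(\zeta):=q_j(\overline\zeta)^\ast$ legitimately converts the inner-product decomposition of $I-p^\ast(w)^\ast p^\ast(z)$ into the stated outer-product decomposition of $I-p(z)p(w)^\ast$, with the factor emerging as $(1-z_k\overline{w}_k)$. One point you rightly flagged and which deserves emphasis: the second assertion tacitly requires the given realization of $p$ to be contractive (e.g.\ the unitary one furnished by Theorem \ref{th:Agler_Ball_Trent}, part c)); for an arbitrary realization the adjoint block matrix need not be contractive and Theorem \ref{completion to SOS} would not apply.
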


\begin{Remark} The assumption that $(I-D^\ast E(z))^{-1} B^\ast$ is a polynomial function
is satisfied e.g. if $\det(I-D^\ast E(z))=1$. Writing $D^\ast E(z)$ as a linear pencil
$$ D^\ast E(z) = D^\ast p_1 z_1 + \cdots + D^\ast p_d z^d, $$
with mutually orthogonal projections that add up to the identity $p_1+p_2 +\cdots+p_d = I$,
we infer that, for every point $z \in \C^d$, the operator $D^\ast E(z)$ is nilpotent.
Indeed, it suffices to consider the equation
$$ \det ( I -\zeta D^\ast E(z)) = 1, \ \  \zeta \in \C,\quad z\in\C^d,$$
and put $D^\ast E(z)$ in the upper-triangular form. By taking adjoints, this amounts to
the condition that the linear pencil $ E(z)D$ is nilpotent for all $z \in \C^d$. By cyclic invariance
$$ \det(I-E(z)D) = \det(I-DE(z))$$
hence the original assumption is equivalent to the fact that the linear pencil $DE(z)$
consists of nilpotent linear transformations.

\end{Remark}

 Greg Knese and collaborators
have recently revealed many new details about Agler's
decomposition of a positive kernel on the polydisk, see
\cite{Knese}.



\begin{thebibliography}{99}

\bibitem{AM} {J. Agler, J. McCarthy}:
{\it Pick Interpolation and Hilbert Function Spaces}, Graduate
Studies in Mathematics,  Amer. Math. Soc., Providence, R.I., 2002.

\bibitem{BtH} J. A. Ball, S. ter Horst, Robust control, multidimensional systems
and multivariable Nevanlinna-Pick interpolation, {sl Topics in Operator Theory
Operator Theory: Advances and Applications} 203 (2010) 13-88.

\bibitem{BT} J. A. Ball and T.T. Trent, Unitary colligations, reproducing kernel Hilbert spaces, and Nevanlinna-Pick interpolation in several variables, {\sl J. Funct. Analysis} 157 (1998) 1-61.

\bibitem{Bose} N. K. Bose, {\it Multidimensional System Theory and Applications}, Springer, Berlin, 1995.






\bibitem{CPSS2012} M. Charina, M. Putinar, C. Scheiderer and J. St\"ockler, An algebraic
perspective on multivariate tight wavelet frames, {\sl Constr.
Approx.} 38 (2013) 253-276.

\bibitem{CS} M.~Charina and J.~St\"ockler, Tight wavelet frames for irregular multiresolution analysis,
\sl{Appl. Comput. Harmon. Anal.} 25 (2008) 98–113.

\bibitem{CH} {C. K. Chui and W. He, Compactly supported tight frames
associated with refinable functions, {\sl Appl. Comput. Harmon. Anal.}  8 (2000) 293-319.}

\bibitem{CH01} C. K. Chui and W. He, Construction of multivariate tight frames via Kronecker products,
{\sl Appl. Comput. Harmon. Anal.}  11 (2001) 305-312.

\bibitem{CHS01} {C. K.~Chui, W.~He and J.~St\"ockler,
Compactly supported tight and sibling frames with maximum vanishing moments,
{\sl Appl. Comp. Harmonic Anal.}  13 (2002)   224-262.  }






\bibitem{CW} B. J. Cole, J. Wermer, Ando's theorem and sums of squares, {\sl Indiana
Math. J.}  48 (1999), 767-791.





\bibitem{DHRS03} {I.~Daubechies, B.~Han, A.~Ron, Z.~Shen,  Framelets: MRA-based constructions
of wavelet frames, {\sl Appl. Comp. Harmonic Anal.}  14 (2003) 1-46.}



\bibitem{Drury} S. W. Drury, {\it Remarks on von Neumann's inequality}, Banach
spaces, Harmonic analysis, and Probability Theory (R. C. Blei and
S. J. Sidney, eds.), Lecture Notes Math.  995, Springer, Berlin,
1983,  14-32.

\bibitem{FF}  C. Foia\c{s} and A. Frazho, {\it The commutant lifting approach to
interpolation problems}, Birkh\"auser, Basel, 1989.




\bibitem{HanMo2005} {B. Han and Q. Mo, Multiwavelet frames from refinable function
      vectors, {\sl Adv. Comput. Math.}  18 (2003)  211-245.}

\bibitem{HHS} {D.~P.~Hardin, T.~G.~Hogan, Q.~Sun,
The matrix-valued Riesz lemma and local orthonormal bases in
shift-invariant spaces, {\sl Advances Comput. Math.}  20 (2004),
367-384.}

\bibitem{HJ} R. A. Horn, C. R. Johnson, {\it Topics in Matrix Analysis}, Cambridge
University Press, Cambridge, 1991.



\bibitem{JiaJiang2002} R. Q. Jia, Q. Jiang, Approximation power of reﬁnable vectors of functions,
in: Wavelet Analysis and Applications, Studies in Advanced Mathematics 25,
American Mathematical Society, Providence, RI, 2002, 155–178.


\bibitem{Knese} G. Knese, Rational inner functions in the Schur-Agler class
of the polydisk, {\sl Publ. Mat.}  55 (2011), 343-357.


\bibitem{KP} A. Koranyi, L. Pukansky, Holomorphic functions with positive real part
on polycylinders, {\sl Trans. Amer. Math. Soc.} 108 (1963),
449-456.

\bibitem{Kummert} A. Kummert, Synthesis of two-dimensional lossless m-ports with
prescribed scattering matrix, {\sl Circuits Systems Signal
Process} 8 (1989), 97-119.

\bibitem{LaiNam} {M. J. Lai, K. Nam, On the number of tight wavelet framelets associated with multivariate box splines, J. Approx. Theory Appl., to appear.}

\bibitem{LS06} {M. J. Lai, J. St\"ockler, Construction of multivariate
 compactly supported tight wavelet frames,
 {\sl Appl. Comp. Harmonic Anal.}  21 (2006) 324-348.}



\bibitem{RS95} A.~Ron and Z.~Shen, Affine systems in $L_2(\R^d)$: the
analysis of the analysis operator, {\sl J. of Functional Analysis}
148 (1997) 408-447.

\bibitem{Scheiderer2006} C.~Scheiderer, Sums of squares on real algebraic
surfaces, {\sl Manuscripta Math.} 119 (2006) 395-410.

\bibitem{Schmuedgen1991} K.~Schm\"udgen, The $K$-moment problem for compact semi-algebraic
sets, {\sl Math.\ Ann.} 289 (1991)  203-206.

\bibitem{Selesnick2001} {I. W. Selesnick, Smooth wavelet tight frames with zero
       moments, {\sl Appl. Comput. Harmon. Anal.}  10 (2001) 163-181.}

\bibitem{Zerz} E. Zerz, {\it Topics in Multidimensional Linear System Theory}, Springer, Berlin, 2000.


\end{thebibliography}
\end{document}